\newtheorem{theorem}{Theorem}[section]
\newtheorem{lemma}[theorem]{Lemma}
\theoremstyle{definition}
\newtheorem{remark}{Remark}
\title[Optimal control of a TB model with delays]{Optimal control
of a Tuberculosis model\\ with state and control delays}
\author[C. J. Silva, H. Maurer and D. F. M. Torres]{}
\subjclass{Primary: 34D30; 92D30 Secondary: 49M05; 93A30}
\keywords{Tuberculosis; time delays; stability; optimal control.}
 \email{cjoaosilva@ua.pt}
 \email{maurer@math.uni-muenster.de}
 \email{delfim@ua.pt}
\thanks{The first author is supported
by the FCT post-doc grant SFRH/BPD/72061/2010.}
\thanks{$^*$Corresponding author: maurer@math.uni-muenster.de}
\begin{document}

\maketitle

\centerline{\scshape Cristiana J. Silva}
\medskip
{\footnotesize
\centerline{Center for Research and Development in Mathematics and Applications (CIDMA)}
\centerline{Department of Mathematics, University of Aveiro, 3810--193 Aveiro, Portugal}
}

\medskip

\centerline{\scshape Helmut Maurer$^*$}
\medskip
{\footnotesize
\centerline{Institute of Computational and Applied Mathematics}
\centerline{University of M\"{u}nster, D-48149 M\"{u}nster, Germany}
}

\medskip

\centerline{\scshape Delfim F. M. Torres}
\medskip
{\footnotesize
\centerline{Center for Research and Development in Mathematics and Applications (CIDMA)}
\centerline{Department of Mathematics, University of Aveiro, 3810--193 Aveiro, Portugal}
}

\bigskip


\begin{abstract}
We introduce delays in a tuberculosis (TB) model, representing
the time delay on the diagnosis and commencement
of treatment of individuals with active TB infection.
The stability of the disease free and endemic equilibriums is investigated
for any time delay. Corresponding optimal control problems,
with time delays in both state and control variables,
are formulated and studied. Although it is well-known that
there is a delay between two to eight weeks between TB infection
and reaction of body's immune system to tuberculin,
delays for the active infected to be detected and treated,
and delays on the treatment of persistent latent individuals
due to clinical and patient reasons, which clearly
justifies the introduction of time delays on state and control measures,
our work seems to be the first to consider such time-delays for TB and
apply time-delay optimal control to carry out the optimality analysis.
\end{abstract}


\section{Introduction}

Tuberculosis (TB) is the second leading cause of death
from an infectious disease worldwide \cite{WHO:2014}.
Active TB refers to disease that occurs in someone infected with
\emph{Mycobacterium tuberculosis}. It is characterized by signs or symptoms of
active disease, or both, and is distinct from latent tuberculosis infection,
which occurs without signs or symptoms of active disease. Only individuals
with active TB can transmit the infection. Many people
with active TB do not experience typical TB symptoms in the early stages of
the disease. These individuals are unlikely to seek care early, and may not be
properly diagnosed when seeking care \cite{WHO:screen:TB:2013}.

Delays to diagnosis of active TB present a major obstacle to the control
of a TB epidemic \cite{UysEtAall:PlosOne:2007}, it may worsen the disease,
increase the risk of death and enhance tuberculosis transmission
to the community \cite{delay:diag:review:BMC:2009,Toman:1979}.
Both patient and the health system may be responsible
for the diagnosis delay \cite{delay:diag:review:BMC:2009}.
Efforts should be done in patient knowledge/awareness about TB,
and health care systems should improve case finding strategies
to reduce the delay in diagnosis of active TB
\cite{Lambert:TMIH:2005,delay:diag:review:BMC:2009,Storla:BMC:2008}.

Mathematical models are an important tool in analyzing the spread and control
of infectious diseases \cite{Hethcote:2000}. There are several mathematical
dynamic models for TB, see, e.g.,
\cite{Chavez:JMB:1997,Cohen:NM:2004,Gomes_etall_2007,Rodrigues:TPB:2007}.
In this paper we consider the mathematical model for TB proposed
in \cite{Gomes_etall_2007}. We introduce a discrete time delay
which represents the delay on the diagnosis of individuals
with active TB and commencement of treatment. The stability
of the disease free and endemic equilibriums is analyzed for any time delay.

Optimal control theory has been successfully applied to TB mathematical models
(see, e.g., \cite{RodSilvaTorres2014,MR2970904,SilvaTorres:review} and references
cited therein). We propose and analyze an optimal control problem where the
control system is the mathematical model from \cite{Gomes_etall_2007}, but with
a time delay in the state variable that represents individuals with active TB,
and introduce two control functions. The control functions represent the fraction
of early and persistent latent individuals that are treated for TB. Treatment
of latent TB infection greatly reduces the risk that TB infection will progress
to active TB disease. Certain groups are at very high risk of developing active
TB disease once infected. Every effort should be made to begin appropriate
treatment and to ensure completion of the entire course of treatment for
latent TB infection \cite{url:center:diseases}. Treatment of latent TB
infection should be initiated after the possibility of TB disease has been excluded.
It can take 2 to 8 weeks after TB infection for the body's immune system to react
to tuberculin and for the infected to be detected, which justifies the introduction
of a time delay on the control associated to treatment of early latent individuals.
On the other hand, delays in the treatment of latent TB may also occur 
due to clinical and demographic patient and health care services characteristics.
For these reasons, we consider discrete time delays in both control functions.
To our knowledge, this work is the first to apply optimal control
theory to a TB model with time delay in state and control variables.

The paper is organized as follows. In Section~\ref{sec:2}
we formulate the TB model with state delay. The stability
of the disease free equilibrium is analyzed in Section~\ref{sec:3}
while stability of the endemic equilibrium is investigated in
Section~\ref{sec:4}. Optimal control of TB with state and control delays
is carried out in Section~\ref{sec:5} and some numerical results given in
Section~\ref{sec:6}. We end with Section~\ref{sec:7} of conclusions.


\section{TB model with state delay}
\label{sec:2}

In this section we consider a TB mathematical model proposed in \cite{Gomes_etall_2007},
where reinfection and post-exposure interventions for tuberculosis are considered.
The model divides the total population into five categories: susceptible ($S$);
early latent ($L_1$), \textrm{i.e.}, individuals recently infected (less than
two years) but not infectious; infected ($I$), \textrm{i.e.}, individuals who
have active TB and are infectious; persistent latent ($L_2$), \textrm{i.e.},
individuals who were infected and remain latent; and recovered ($R$),
\textrm{i.e.}, individuals who were previously infected and have been treated.
As in \cite{Gomes_etall_2007}, we assume that the total population, $N$,
with $N = S(t) + L_1(t) + I(t) + L_2(t) + R(t)$, is constant in time. In other
words, we assume that the birth and death rates are equal and there are no
TB-related deaths. We introduce a discrete time-delay in the state variable $I$,
denoted by $d_{I}$, that represents the delay to diagnosis and commencement
of treatment of active TB infection,
\begin{equation}
\label{TBmodel:delay:state}
\begin{cases}
\dot{S}(t) = \mu N - \frac{\beta}{N} I(t) S(t) - \mu S(t),\\
\dot{L_1}(t) = \frac{\beta}{N} I(t)\left( S(t)
+ \sigma L_2(t) + \sigma_R R(t)\right) - \left(\delta + \tau_1 + \mu\right)L_1(t),\\
\dot{I}(t) = \phi \delta L_1(t) + \omega L_2(t) + \omega_R R(t)
- \tau_0 I(t - d_I) - \mu I(t),\\
\dot{L_2}(t) = (1 - \phi) \delta L_1(t) - \sigma \frac{\beta}{N} I(t) L_2(t)
- (\omega  + \tau_2 + \mu)L_2(t),\\
\dot{R}(t) = \tau_0 I(t - d_I) +  \tau_1 L_1(t)
+ \tau_2 L_2(t) - \sigma_R \frac{\beta}{N} I(t) R(t) - (\omega_R + \mu)R(t) \, .
\end{cases}
\end{equation}
The initial conditions for system (\ref{TBmodel:delay:state}) are
\begin{equation}
\label{eq:init:cond:moddelay}
S(\theta) = \varphi_1(\theta), \, L_1(\theta) = \varphi_2(\theta), \,
I(\theta) = \varphi_3(\theta), \, L_2(\theta) = \varphi_4(\theta), \,
R(\theta) = \varphi_5(\theta),
\end{equation}
$-d_I \leq \theta \leq 0$, where
$\varphi=\left(\varphi_1, \varphi_2, \varphi_3, \varphi_4,
\varphi_5 \right)^T \in C$ with $C$ the Banach space
$C \left([-d_I, 0], {\mathbb{R}}^5 \right)$ of continuous functions
mapping the interval $[-d_I, 0]$ into ${\mathbb{R}}^5$.
From biological meaning, we further assume that
$\varphi_i(0) > 0$ for $i = 1, \ldots, 5$.

Throughout this paper, we focus on the dynamics of the solutions of
(\ref{TBmodel:delay:state}) in the restricted region
$$
\Omega = \left\{ (S, L_1, I, L_2, R) \in {\mathbb{R}}^5_{+0} \, | \,  0
\leq S + L_1 + I + L_2 + R = N \right\}.
$$
In this region, the usual local existence, uniqueness and continuation results
apply \cite{Hale_Lunel_book1993,YKuang_1993}. Hence, a unique solution
$\left( S(t), L_1(t), I(t), L_2(t), R(t)\right)$ of (\ref{TBmodel:delay:state})
with initial condition (\ref{eq:init:cond:moddelay}) exists for all time $t \geq 0$.
Consider the solutions of (\ref{TBmodel:delay:state}) with $(\varphi_1(\theta),
\ldots, \varphi_5(\theta)) \in Int \, \Omega$, $i = 1, \ldots, 5$,
for all $\theta \in [-d_I, 0]$. Then the solutions stay in the interior
of the region $\Omega$ for all time $t \geq 0$, i.e., the region $\Omega$
is positively invariant with respect to system (\ref{TBmodel:delay:state})
(see, e.g., \cite{YKuang_1993}).

A mathematical model has a disease free equilibrium
if it has an equilibrium point at which the population
remains in the absence of the disease \cite{van:den:Driessche:2002}.
The model (\ref{TBmodel:delay:state}) has a disease free equilibrium
given by $E_0 = (N, 0, 0, 0, 0)$.

The basic reproduction number $R_0$ represents the expected average
number of new TB infections produced by a single TB active infected
individual when in contact with a completely susceptible
population \cite{van:den:Driessche:2002}.
For model (\ref{TBmodel:delay:state}) it is given by
\begin{equation}
\label{eq:R0}
R_0 = \frac{\beta}{\mu} \frac{ \omega_R \left( \omega+ \tau_2+\mu
 \right) \tau_1 + \delta  \left[  \left( \omega_R +\mu \right)
\left( \phi \mu + \omega \right) + \left( \omega_R + \phi \mu
\right) \tau_2  \right]  }{\left( \tau_0 +\mu+\omega_R \right)
\left( \omega+\tau_2 +\mu \right) \left(\delta+\tau_1+\mu \right) }
= \frac{\mathcal{N}}{\mathcal{D}}\, .
\end{equation}
Note that in \cite{Gomes_etall_2007} the basic reproduction number is deduced
under the assumption that $\tau_1=\tau_2=0$. The expression (\ref{eq:R0})
generalizes the one given in \cite{Gomes_etall_2007}.


\section{Stability of the disease free equilibrium}
\label{sec:3}

It is important to analyze the stability of the disease free equilibrium,
as it indicates whether the population will remain in the absence of the disease,
or the disease will persist for all time
\cite{van:den:Driessche:2002,YangWei:DCDSB:2015}.
System \eqref{TBmodel:delay:state} is equivalent to
\begin{equation}
\label{TBmodel:delay:state:4eq}
\begin{cases}
\dot{S}(t) = \mu N - \frac{\beta}{N} I(t) S(t) - \mu S(t),\\
\dot{L_1}(t) = \frac{\beta}{N} I(t)\left( S(t)
+ \sigma L_2(t) + \sigma_R (N - S(t) - L_1(t) - I(t) - L_2(t))\right)\\
\quad \quad - \left(\delta + \tau_1 + \mu\right)L_1(t),\\
\dot{I}(t) = \phi \delta L_1(t) + \omega L_2(t)
+ \omega_R (N - S(t) - L_1(t) - I(t) - L_2(t))\\
\quad \quad - \tau_0 I(t - d_I) - \mu I(t),\\
\dot{L_2}(t) = (1 - \phi) \delta L_1(t) - \sigma \frac{\beta}{N} I(t) L_2(t)
- (\omega  + \tau_2 + \mu)L_2(t) \, ,
\end{cases}
\end{equation}
where the equation for $R(t)$ is derived from
$R(t) = N - S(t) - L_1(t) - I(t) - L_2(t)$.
The disease free equilibrium of system \eqref{TBmodel:delay:state:4eq}
is given by $(S_0, L_{10}, I_0, L_{20}) = (N, 0, 0, 0)$.
To discuss its local asymptotic stability,
let us consider the coordinate transformation
$s(t) = S(t) - \overline{S}$, $l_1(t) = L_1(t) - \overline{L}_1$,
$i(t) = I(t) - \overline{I}$, $l_2(t) = L_2(t) - \overline{L}_2$,
where $(\overline{S}, \overline{L}_1, \overline{I}, \overline{L}_2)$ denotes
any equilibrium of (\ref{TBmodel:delay:state:4eq}). Hence, we have that the
corresponding linearized system of (\ref{TBmodel:delay:state:4eq}) is of the form
\begin{equation}
\label{model:linear:delay:4eq}
\begin{cases}
\dot{s}(t) = -{\frac {\beta\,\overline{I}+\mu\,N}{N}} \, s(t)
-\frac {\beta}{N}\,\overline{S} \, i(t)\\[0.2 cm]
\dot{l}_1(t) = - \frac{\beta}{N}\,\overline{I}\,\left(\sigma_R -1 \right)\,s(t)
- \frac{\beta}{N} \left( \sigma_R \, \overline{I}
+ (\delta + \tau_1 + \mu) N \right) \, l_1(t)\\
\quad \quad \quad -\frac{\beta}{N} \left( -\overline{S}-\sigma\,\overline{L}_2
- \sigma_R (N+ \overline{S}+ \overline{L}_1
+2\,\overline{I}+ \overline{L}_2) \right) \, i(t)
+ {\frac {\beta\,\overline{I}\,
\left( \sigma- \sigma_R  \right) }{N}} \, l_2(t) \\[0.2 cm]
\dot{i}(t) =  - \omega_R \, s(t)  + \left(\phi\,\delta- \omega_R \right) l_1(t)
- \left(\omega_R -\mu \right)  i(t) +\left(\omega- \omega_R \right) l_2(t)
- \tau_0 \,i(t-d_I)\\[0.2 cm]
\dot{l}_2(t) =   \left( 1 - \phi \right) \delta \, l_1(t)
-\frac{\beta}{N}\sigma \,\overline{L}_2 \, i(t)
- \frac{\beta}{N} \left(\sigma \overline{I}
+ (\omega + \tau_2 + \mu) N \right)\, l_2(t) \, .
\end{cases}
\end{equation}
We then express system (\ref{model:linear:delay:4eq}) in matrix form as follows:
\begin{equation*}
\frac{d}{dt}\left( \begin{array}{c}
s(t)\\
l_1(t)\\
i(t)\\
l_2(t)
\end{array} \right) = A_1
\left( \begin{array}{c}
s(t)\\
l_1(t)\\
i(t)\\
l_2(t)
\end{array} \right) + A_2
\left( \begin{array}{c}
s(t-d_I)\\
l_1(t-d_I)\\
i(t-d_I))\\
l_2(t-d_I)
\end{array} \right)
\end{equation*}
with $A_1$ the $4 \times 4$ matrix
\begin{equation*}
A_1 = \left[ \begin {array}{cccc}
-{\frac {\beta\,\overline{I}+\mu\,N}{N}}
&0&-{\frac {\beta\,\overline{S}}{N}}&0\\ \noalign{\medskip}
{\frac {\beta\,\overline{I}\, \left( 1-\sigma_R \right)}{N}}
&-{\frac {\beta\,\overline{I}\,\sigma_R + c_1 N}{N}}
&{\frac {\beta\, \left( \overline{S}+\sigma\,\overline{L}_2
+\sigma_R (N+ \overline{S}+ \overline{L}_1
+2\,\overline{I}+\overline{L}_2)\right)}{N}}
&{\frac {\beta\,\overline{I}\,
\left( \sigma- \sigma_R  \right) }{N}}\\ \noalign{\medskip}
- \omega_R &\phi\,\delta- \omega_R &- \omega_R -\mu&\omega- \omega_R\\
\noalign{\medskip}
0&- \left( -1+\phi \right) \delta&-{\frac {\sigma \,\beta\,\overline{L}_2}{N}}
&-{\frac {\beta\,\overline{I}\,\sigma+ c_2 N}{N}}\end {array} \right],
\end{equation*}
where $c_1 = \delta + \tau_1 + \mu$ and $c_2 = \omega + \tau_2 + \mu$,
and $A_2$ the $4 \times 4$ diagonal matrix
$A_2 = \textrm{diag}(0, 0, -\tau_0, 0)$.
The transcendental characteristic equation of system
(\ref{model:linear:delay:4eq}) is defined by $\Delta(\lambda)
= \det \left( \lambda I - A_1 - {\rm e}^{-\lambda d_I} A_2 \right) = 0$
and is given by
\begin{equation}
\label{eq:caract}
\Delta(\lambda) =  P(\lambda) + Q(\lambda) =0,
\end{equation}
where
\begin{equation*}
\begin{split}
P(\lambda) &=  \lambda^4 + a_3 \lambda^3 + a_2 \lambda^2 + a_1 \lambda + a_0,\\
Q(\lambda) &= \tau_0  \left( \lambda+\mu \right) \left( \lambda+ c_1 \right)
\left( \lambda+ c_2 \right) \left( {\rm e}^{-\lambda d_I} -1 \right),
\end{split}
\end{equation*}
with $a_0 = \mathcal{D} - \mathcal{N}$,
\begin{equation*}
\begin{split}
a_1 &= \frac{2}{\mu} \, \mathcal{D} + \mu^2(c_1 + c_2 + c_4) - c_4 c_5 c_6
- \beta (\tau_1 \omega_R + \omega \delta +\delta \phi(\omega_R + \tau_2 + 2\mu)),\\
a_2 &= c_4 c_5 +3\, \mu (c_1 + c_2 + c_4)+ c_6 (c_4 + c_5) -\beta\,\phi\,\delta \, , \\
a_3 &= c_1 + c_2 + c_3 + \mu,
\end{split}
\end{equation*}
and $c_3 = \omega_R + \tau_0 +  \mu$, $c_4 = \tau_0 + \omega_R$,
$c_5 = \tau_2 + \omega$, $c_6 = \delta + \tau_1$.

\begin{remark}
For any $d_I \geq 0$ and $a_3 > 0$, if $R_0 < 1$, then $a_0 > 0$.
\end{remark}

Recall that an equilibrium point is asymptotically stable if all roots
of the corresponding characteristic equation have negative
real parts \cite{Bellman1963}.

\begin{lemma}
\label{lemma:E0:instable}
If $R_0 > 1$, then the disease free equilibrium $E_0$
is unstable for any $d_I \geq 0$.
\end{lemma}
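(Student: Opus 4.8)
The plan is to exhibit a positive real root of the characteristic equation \eqref{eq:caract}, which by the criterion recalled just before the statement (instability follows as soon as some root has positive real part) immediately yields instability of $E_0$. The key structural observation is that the delay term $Q(\lambda)$ carries the factor $\left(\mathrm{e}^{-\lambda d_I}-1\right)$, which vanishes at $\lambda=0$ for \emph{every} $d_I \geq 0$; this is exactly what will make the argument uniform in the delay.

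First I would evaluate $\Delta$ at the origin. Since $Q(0)=0$, we obtain $\Delta(0)=P(0)=a_0=\mathcal{D}-\mathcal{N}$. Because all model parameters are positive we have $\mathcal{D}>0$, so the hypothesis $R_0=\mathcal{N}/\mathcal{D}>1$ is equivalent to $\mathcal{N}>\mathcal{D}$, i.e.\ to $a_0<0$; hence $\Delta(0)<0$. Next I would study the behaviour of $\Delta$ along the positive real axis. For real $\lambda \geq 0$ the factor $\mathrm{e}^{-\lambda d_I}-1$ lies in $[-1,0]$, so $|Q(\lambda)|$ grows at most cubically in $\lambda$, whereas $P(\lambda)$ is a monic quartic; therefore $\Delta(\lambda)=P(\lambda)+Q(\lambda)\to+\infty$ as $\lambda\to+\infty$. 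Since $\Delta$ is continuous on $[0,\infty)$ with $\Delta(0)<0$ and $\Delta(\lambda)\to+\infty$, the intermediate value theorem furnishes a root $\lambda^{*}>0$ of \eqref{eq:caract}. This real positive root has positive real part, so $E_0$ is unstable, and nothing in the argument depends on the particular value of $d_I$.

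The proof is not deep once this route is chosen; the only points demanding care are the two sign identifications, namely $\Delta(0)=a_0$ (relying on $Q(0)=0$, valid for all $d_I$) and $a_0<0\iff R_0>1$ (relying on $\mathcal{D}>0$), together with the growth estimate that $Q$ cannot overwhelm the leading $\lambda^4$ term of $P$ on the real axis. The mild subtlety, and what I would flag as the main thing to get right, is the \emph{uniformity} in $d_I$: both the vanishing of $Q$ at $\lambda=0$ and the boundedness of $\mathrm{e}^{-\lambda d_I}-1$ on $[0,\infty)$ hold for every $d_I \geq 0$, including the limiting case $d_I=0$ where $Q\equiv 0$ and $\Delta=P$, so the single intermediate-value argument covers the entire range of delays at once.
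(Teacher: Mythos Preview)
Your proof is correct and follows essentially the same approach as the paper: compute $\Delta(0)=\mathcal{D}-\mathcal{N}<0$ under $R_0>1$, observe that $\Delta(\lambda)\to+\infty$ on the positive real axis, and invoke the intermediate value theorem to produce a positive real root. Your additional remarks on why $Q(0)=0$ and why the cubic $Q$ cannot dominate the quartic $P$ as $\lambda\to+\infty$ simply make explicit what the paper leaves to the reader.
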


\begin{proof}
The characteristic equation (\ref{eq:caract}) satisfies
$\Delta(0) = \mathcal{D} - \mathcal{N}$.
Assuming $R_0 > 1$, then $\Delta(0) < 0$. Since
$\lim_{\lambda \to +\infty} \Delta(\lambda) = + \infty$,
there exists at least one positive root of (\ref{eq:caract}).
\end{proof}

\begin{lemma}
If
(i) $R_0 < 1$,
(ii) $a_1 > 0$,
(iii) $a_2 > 0$,
(iv) $a_3a_2 > a_1$, and
(v)~$a_3a_2a_1 > a_1^2 + a_3^2 a_0$,
then the disease free equilibrium $E_0$
is locally asymptotically stable for $d_I = 0$.
\end{lemma}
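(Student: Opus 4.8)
The plan is to reduce the transcendental characteristic equation to an ordinary quartic by exploiting the hypothesis $d_I = 0$, and then to invoke the Routh--Hurwitz criterion. First I would set $d_I = 0$ in \eqref{eq:caract}. The delay factor appearing in $Q(\lambda)$ becomes ${\rm e}^{-\lambda \cdot 0} - 1 = 0$, so $Q(\lambda) \equiv 0$ and the characteristic equation collapses to the polynomial equation
\begin{equation*}
\Delta(\lambda) = P(\lambda) = \lambda^4 + a_3 \lambda^3 + a_2 \lambda^2 + a_1 \lambda + a_0 = 0 .
\end{equation*}
By the fact recalled just before the lemma, $E_0$ is asymptotically stable precisely when every root of this quartic has negative real part, so the problem is now purely algebraic.

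Next I would apply the Routh--Hurwitz stability criterion for a degree-four polynomial with positive leading coefficient. All roots lie in the open left half-plane if and only if the associated Hurwitz determinants are positive, which for this quartic amounts to
\begin{equation*}
a_3 > 0, \quad a_0 > 0, \quad a_3 a_2 - a_1 > 0, \quad a_3 a_2 a_1 - a_1^2 - a_3^2 a_0 > 0 .
\end{equation*}
Conditions (iv) and (v) of the lemma are exactly the third and fourth of these inequalities, while (ii) and (iii) secure the positivity of the intermediate coefficients $a_1$ and $a_2$. It then remains only to establish the two inequalities $a_3 > 0$ and $a_0 > 0$, which are not listed explicitly among the hypotheses.

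For $a_3$ I would use its closed form $a_3 = c_1 + c_2 + c_3 + \mu$ together with $c_1 = \delta + \tau_1 + \mu$, $c_2 = \omega + \tau_2 + \mu$, $c_3 = \omega_R + \tau_0 + \mu$: since every model parameter is a nonnegative rate and $\mu > 0$, we obtain $a_3 > 0$ unconditionally. For $a_0$ I would invoke the Remark preceding the lemma: under hypothesis (i) we have $R_0 < 1$, and since $a_0 = \mathcal{D} - \mathcal{N}$ with $R_0 = \mathcal{N}/\mathcal{D}$, this forces $a_0 > 0$. With all four Hurwitz conditions verified, the quartic $P(\lambda)$ is Hurwitz-stable, every root has negative real part, and hence $E_0$ is locally asymptotically stable for $d_I = 0$.

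I do not expect a genuine obstacle here. Once the delay term is seen to vanish identically at $d_I = 0$, the argument is a direct bookkeeping match between the hypotheses (i)--(v) and the classical Routh--Hurwitz determinant inequalities; the only mild subtlety is to notice that $a_3 > 0$ and $a_0 > 0$ come for free, respectively from parameter positivity and from the Remark (so that conditions (ii) and (iii), and even (iv), are in fact somewhat redundant once (v) and positivity are in force).
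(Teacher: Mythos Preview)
Your proof is correct and follows essentially the same approach as the paper: set $d_I=0$ so that $Q(\lambda)\equiv 0$ and the characteristic equation reduces to the quartic $P(\lambda)=0$, then apply the Routh--Hurwitz criterion. If anything, your version is slightly more explicit than the paper's, since you spell out why $a_3>0$ (parameter positivity) and $a_0>0$ (the Remark with $R_0<1$) rather than leaving these implicit.
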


\begin{proof}
When $d_I =0$, the associated transcendental characteristic equation
(\ref{eq:caract}) of system (\ref{model:linear:delay:4eq}) at
$(N, 0, 0, 0) = (\overline{S}, \overline{L}_1, \overline{I}, \overline{L}_2)$
becomes $\Delta(\lambda) = P(\lambda) = 0$.
Using the Routh--Hurwitz criterion for fourth-order polynomials, all the roots
of $P(\lambda)$ have negative real part if all coefficients satisfy
$a_n > 0$ for $n = 0, \ldots, 3$, $a_3a_2 > a_1$,
and $a_3a_2a_1 > a_1^2 + a_3^2 a_0$.
\end{proof}

In the case $d_I > 0$, by Rouch\'{e}'s theorem \cite{Dieudonne:1960},
if instability occurs for a particular value of the delay $d_I$,
then a characteristic root of (\ref{eq:caract}) must intersect the imaginary axis.
Our aim is to prove that the polynomial (\ref{eq:caract}) does not have purely
imaginary roots for all positive delays (see, e.g.,
\cite{BuonomoMBE2015,CulshawMBS2000}). The complex $i\, b$, $b > 0$,
is a root of (\ref{eq:caract}) if and only if ${b}^{4} - i a_3 {b}^{3}
- a_2 {b}^{2} + i a_1 b+ a_0 + \tau_0 \left( i b +\mu \right)
\left( i b+ c_1 \right)  \left(i b+ c_2 \right)
\left( {{\rm e}^{-i b \, d_I }}-1 \right) = 0$.
By using Euler's formula $\exp^{-i b d_I} = \cos(b d_I) -i \sin(b d_I)$,
and by separating real and imaginary parts, we have
\begin{multline*}
a_3 {b}^{3} + \tau_0 b c_1 c_2 - a_1 b -\tau_0 {b}^{3} + \tau_0 \mu c_1 b
+ \tau_0 \mu b c_2 =  A \cos(b d_I) - B \sin(b d_I) \\
-{b}^{4}+ a_2 {b}^{2}-\tau_0 {b}^{2} c_1 - a_0
-\tau_0 {b}^{2} c_2 - \mu \tau_0 {b}^{2}+\tau_0 \mu c_1 c_2
= A \sin(b d_I) + B (\cos(b d_I))
\end{multline*}
with
$A = \tau_0 b (c_1 c_2 - {b}^{2}+ \mu (c_1 + c_2))$
and
$B = \tau_0 ( \mu c_1 c_2 - {b}^{2} (\mu + c_1 + c_2))$.
Adding up the squares of both equations, we obtain that
\begin{equation}
\label{eq:square}
{b}^{8}+ \alpha_3 {b}^{6}+\alpha_2 {b}^{4}+\alpha_1 {b}^{2}+\alpha_0  =0,
\end{equation}
where $\alpha_0 = a_0(a_0 -2\,\mu\,\tau_0\,c_1\,c_2)$,
\begin{equation*}
\begin{split}
\alpha_1 &= 2 \tau_0 (\mu (a_0 + a_2 c_1\,c_2 - a_1 (c_1+ c_2))
+ a_0 (c_1 + c_2) - a_1 c_1 c_2)-2 a_2 a_0 + a_1^{2},\\
\alpha_2 &= 2 \tau_0 \left( \mu (a_3(c_1 + c_2) - a_2 - c_1 c_2)
- a_2 (c_1 + c_2) + a_3 c_1 c_2 + a_1 \right) +2 a_0 + a_2^{2}  \\
& \hspace{4mm}  -2 a_3 a_1,\\[-0.2mm]
\alpha_3 &= 2 \tau_0 (\mu+c_1+c_2)+{a_3}^{2} -2(a_3 \tau_0+a_2).
\end{split}
\end{equation*}
Let $z = b^2$. Then (\ref{eq:square}) becomes
\begin{equation}
\label{eq:square2}
{z}^{4}+ \alpha_3 {z}^{3}+\alpha_2 {z}^{2}+\alpha_1 {z} +\alpha_0  =0.
\end{equation}
By the Routh--Hurwitz criterion, (\ref{eq:square2}) has no positive real roots
if $\alpha_i > 0$, $i = 0, \ldots, 3$, $\alpha_3 \alpha_2 > \alpha_1$,
and $\alpha_3 \alpha_2 \alpha_1 > \alpha_1^2 + \alpha_3^2 \alpha_0$.

For the parameter values of Table~\ref{table:parameters} and
$\beta = 40$ ($R_0 = 0.880827$), the Routh--Hurwitz criterion does not hold.
In fact, equation (\ref{eq:square2}) takes the form
\begin{equation}
\label{eq:square:param}
{z}^{4}+ 241.429794 {z}^{3}+ 31.065028 {z}^{2} -221.270089 {z} -0.037233 =0
\end{equation}
and we immediately see that the coefficient $\alpha_1 = -221.270089$ is not
positive. Moreover, the roots of equation (\ref{eq:square:param}) 
are approximately $0.89$, $-0.00017$, $-1.03$, $-241.30$, 
therefore there exists a positive imaginary root given by $0.89 i$. 
Thus, there exists at least one time
delay such that the disease free equilibrium is unstable.
We have just proved the following result.

\begin{lemma}
Let $R_0 < 1$. Then there exists at least
one positive time delay $d_I > 0$
such that the disease free equilibrium
$(N, 0, 0, 0)$ is unstable.
\end{lemma}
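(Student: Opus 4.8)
The plan is to establish the statement by \emph{counterexample}. Although $R_0 < 1$ already forces local asymptotic stability of $(N,0,0,0)$ at $d_I = 0$ (under the Routh--Hurwitz conditions of the preceding lemma), I would exhibit one explicit admissible parameter set, still satisfying $R_0 < 1$, for which a characteristic root crosses the imaginary axis once the delay is switched on, thereby destroying stability. The destabilizing mechanism I would exploit is the appearance of a purely imaginary root $\lambda = i\,b$, $b > 0$, of the transcendental equation (\ref{eq:caract}) at some positive delay $d_I^\ast$.

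The first step is to reuse the reduction carried out above: substituting $\lambda = i\,b$ into (\ref{eq:caract}), separating real and imaginary parts, and eliminating the delay by adding the squares of the two resulting equations yields the quartic (\ref{eq:square2}) in the variable $z = b^2$. Consequently, a purely imaginary root $i\,b$ can occur for some $d_I$ if and only if (\ref{eq:square2}) admits a \emph{positive} real root $z = b^2$. By the Routh--Hurwitz criterion, (\ref{eq:square2}) has no positive real root precisely when all $\alpha_i > 0$ together with the two determinant inequalities hold; hence it suffices to produce parameters for which this criterion \emph{fails} in a way that actually forces a positive root, most transparently by driving one of the lower-order coefficients negative, since a single sign change in the coefficient sequence guarantees a positive root by Descartes' rule.

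The second step is the concrete instance. Taking the values of Table~\ref{table:parameters} with $\beta = 40$, I would first verify from (\ref{eq:R0}) that $R_0 = 0.880827 < 1$, so the hypothesis is met. Evaluating $\alpha_0,\dots,\alpha_3$ then gives the quartic (\ref{eq:square:param}), in which $\alpha_1 = -221.270089 < 0$; the lone sign change in its coefficients shows, via Descartes' rule, that (\ref{eq:square:param}) has exactly one positive real root, located by direct computation at $z^\ast \approx 0.89$. Setting $b = \sqrt{z^\ast}$ produces a genuine purely imaginary candidate $\lambda = i\,b$. Feeding this $b$ back into the real and imaginary part equations determines $\cos(b\,d_I)$ and $\sin(b\,d_I)$, and since these were built to satisfy $\cos^2 + \sin^2 = 1$ by the squaring construction, they are consistent and fix $b\,d_I$ modulo $2\pi$, yielding an explicit critical delay $d_I^\ast > 0$ at which $i\,b$ is an actual root of (\ref{eq:caract}).

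Finally, I would assemble the conclusion. At $d_I = 0$ every root of (\ref{eq:caract}) lies in the open left half-plane, the roots depend continuously on $d_I$, and by Rouch\'{e}'s theorem a loss of stability can only occur through an imaginary-axis crossing; the existence of $d_I^\ast$ with a root on the imaginary axis therefore signals such a crossing, so for $d_I = d_I^\ast$ (and delays just beyond it) the equilibrium is no longer asymptotically stable. I expect the delicate point to be not the algebra but this last logical step: strictly, a root \emph{on} the imaginary axis gives only marginal, not manifest, instability, so to be fully rigorous one should verify the transversality condition $\operatorname{Re}\!\left(d\lambda/d d_I\right) > 0$ at $d_I^\ast$, guaranteeing that the root actually moves into the right half-plane as the delay increases. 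For the mere existence claim of the lemma, however, it suffices to note that a purely imaginary characteristic root already precludes the asymptotic stability that held at $d_I = 0$, so $d_I^\ast$ is the required destabilizing delay.
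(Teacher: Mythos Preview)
Your proposal is correct and follows essentially the same approach as the paper: the paper likewise takes the parameters of Table~\ref{table:parameters} with $\beta=40$ (so $R_0\approx 0.8808<1$), computes the quartic \eqref{eq:square:param}, observes that $\alpha_1<0$ and that it has the positive root $z\approx 0.89$, and concludes that a purely imaginary characteristic root exists for some $d_I>0$. Your use of Descartes' rule and your remarks on recovering $d_I^\ast$ and on transversality add welcome rigor beyond what the paper spells out, but the route is the same.
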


\begin{remark}
\label{obs1}
Observe that there may exist specific time delays for which the
disease free equilibrium $(N, 0, 0, 0)$ is locally asymptotically
stable when $R_0 < 1$. As we show next, this is the case for
the time delay $d_I = 0.1$. Indeed, consider the parameter values
of Table~\ref{table:parameters} and $\beta$ such that $R_0 < 1$. For example,
let $\beta = 40$, for which $R_0 = 0.880827$. The results are given
in Figure~\ref{fig:statevar:DFE}, where to compute the trajectories 
we have used the \textsc{Matlab} routine \texttt{dde23}, which 
solves delay differential equations with constant delays. 
For the theoretical results that underlie this solver 
we refer to \cite{MR1831793}.
\begin{figure}[ht!]
\centering
\subfloat[\footnotesize{State variables}]{\label{statevar:DFE}
\includegraphics[width=0.49\textwidth]{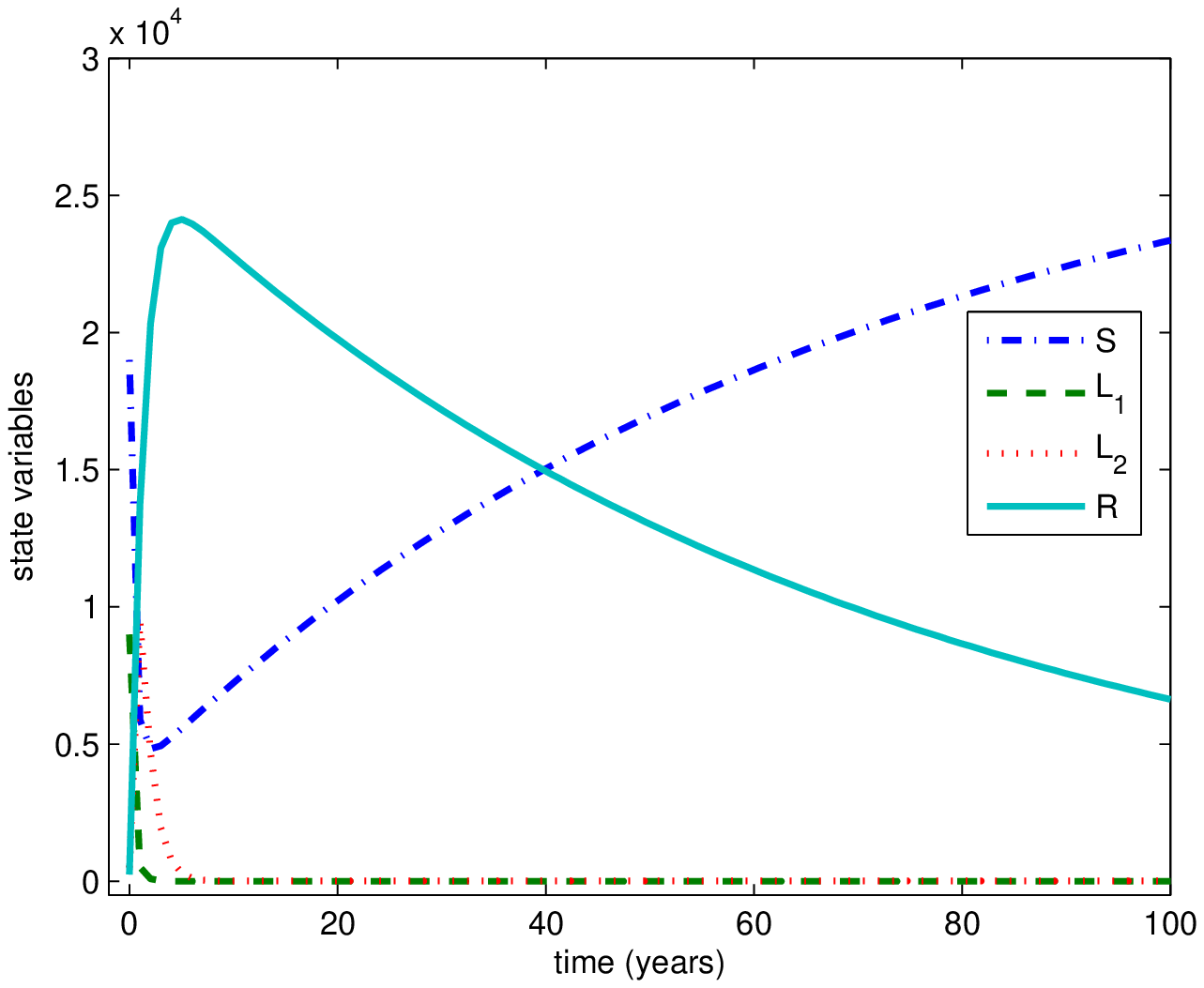}}
\subfloat[\footnotesize{Infected active TB ($T = 100$)}]{\label{I:DFE:tf100}
\includegraphics[width=0.49\textwidth]{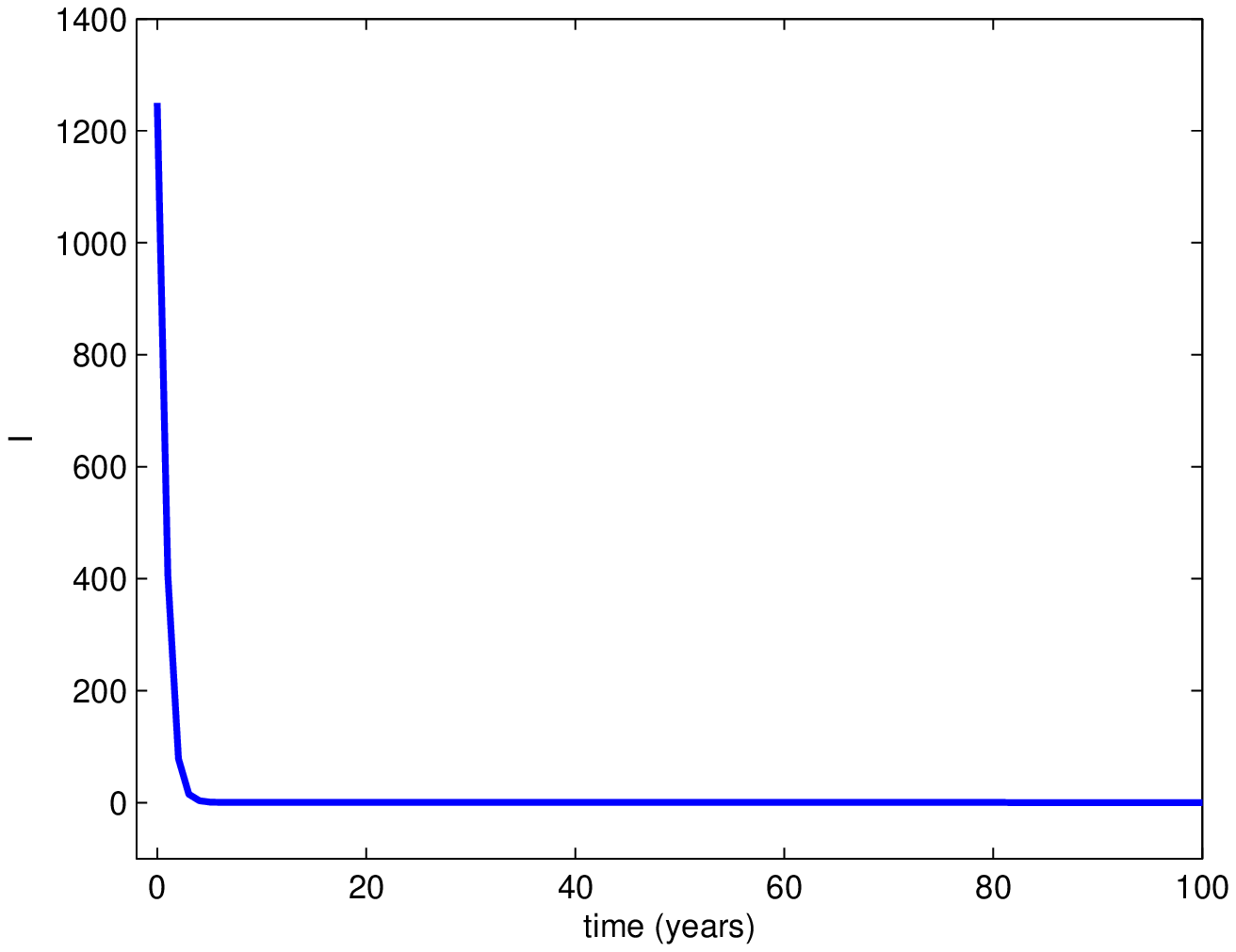}}
\caption{Disease free equilibrium with
basic reproduction number $R_0 = 0.88$ ($\beta = 40$, $d_I = 0.1$
and the other values from Table~\ref{table:parameters}).}
\label{fig:statevar:DFE}
\end{figure}
The characteristic equation (\ref{eq:caract}) takes the form $\chi(\lambda)=0$ with
\begin{equation*}
\begin{split}
\chi(\lambda) ={\lambda}^{4} &+ 17.057363\,{\lambda}^{3}+ 20.733305\,{\lambda}^{2}
+4.489748\,\lambda+ 0.048755\\
&+2\, \left( \lambda+{\frac {1}{70}}
\right)  \left( \lambda+{\frac {981}{70}} \right)  \left( \lambda
+1.014486 \right)  \left( {{\rm e}^{-\lambda\, 0.1}}-1 \right) \, .
\end{split}
\end{equation*}
The derivative $\chi'(\lambda)$ has exactly four zeros:
$\bar{\lambda}_1 = -21.408183$,
$\bar{\lambda}_2 = -12.680307$, $\bar{\lambda}_3 = -0.748206$,
and $\bar{\lambda}_4 = -0.151719$.
For all $\lambda \in ]-\infty, \bar{\lambda}_1[$ one has $\chi'(\lambda) > 0$
and since $\lim_{\lambda\rightarrow -\infty} \chi(\lambda) = - \infty$
and $\chi(\bar{\lambda}_1) > 0$, we conclude that there exists a unique
$\lambda_1$ in the interval $]-\infty, \bar{\lambda}_1[$ such that
$\chi(\lambda_1) = 0$. Analogously, we prove that there exists
exactly five roots of $\chi(\lambda)=0$:
$\lambda_1 = -23.481727$,
$\lambda_2 = -18.106597$,
$\lambda_3 =  -1.024343$,
$\lambda_4 =  -0.320880$,
$\lambda_5 =  -0.011482$.
We conclude that there are no positive real roots and that
the disease free equilibrium $(N, 0, 0, 0)$
is locally asymptotically stable for $d_I = 0.1$
and the considered parameter values.
\end{remark}

In this paper we assume that the time delay $d_I$ associated to the diagnosis
of active TB is equal to 0.1, that is, 36.5 days. This value makes sense from
the epidemiological point of view, since it fits in the intervals available
in the literature for the delay in the diagnosis of active TB. For instance,
in \cite{delay:diag:review:BMC:2009} the reported overall patient delay is
similar to the health system delay for diagnosis of active TB, 31.03 and
27.2 days, respectively. The average (median or mean) patient delay
and health system delay range from 4.9 to 162 days and 2 to 87 days,
respectively, in both low and middle income countries and high income countries.


\section{Stability of the endemic equilibrium}
\label{sec:4}

System (\ref{TBmodel:delay:state}) has an unique endemic equilibrium
such that $I(t) > 0$ for any $t > 0$. The analytic expression
is cumbersome and not useful for our purposes. Consider 
the parameter values from Table~\ref{table:parameters} 
with $\beta = 100$. Then the basic reproduction number 
is $R_0 = 2.202067$.
The endemic equilibrium $E^*$ is given by $I = 11.006448$,
$L_1 = 36.111397$, $L_2 = 402.155827$, $S = 8407.668384$.
The matrices $A_1$ and $A_2$ associated to the linearized system
(\ref{model:linear:delay:4eq}) at the endemic equilibrium $E^*$ 
are computed as 
\begin{equation*}
A_1 = \left[ \begin {array}{cccc}
- 0.050974&0&- 28.025561&0\\ \noalign{\medskip}
0.027516&- 14.023458& 45.970734&0\\ \noalign{\medskip}
- 0.000020& 0.599980&- 0.014306& 0.000180\\ \noalign{\medskip}
0& 11.400000&- 0.335130&- 1.023658
\end {array} \right]
\end{equation*}
and $A_2 = \textrm{diag}(0, 0, -2, 0)$.
The transcendental characteristic equation is given by
\begin{multline}
\label{eq:charac:EE}
{\lambda}^{4}+ 15.112395\,{\lambda}^{3}
- 12.243801\,{\lambda}^{2}- 28.331139\,\lambda - 0.966336\\
+ \left( 30.196179\,{\lambda}^{2}+ 30.244462\,
\lambda+ 2 \,{\lambda}^{3}+ 1.463482\right) {{\rm e}^{-\lambda\,
{\it d_I}}} = 0 \, .
\end{multline}
When $d_I = 0$, we have the following characteristic equation:
\begin{equation}
\label{eq:charac:EE:nuI:0}
{\lambda}^{4}+ 17.112395\,{\lambda}^{3}+ 17.952378\,{\lambda}^{2}
+ 1.913323\,\lambda+ 0.497146 = 0 \, .
\end{equation}
The roots of (\ref{eq:charac:EE:nuI:0}) are
$-1.029896$, $-15.997555$,
$-0.042472 -0.168435 \, i$, $-0.042472 +0.168435 \, i$.
All the roots of (\ref{eq:charac:EE:nuI:0}) have negative real part,
thus the endemic equilibrium $E^*$ is asymptotical stable.
Consider now the case $d_I > 0$ and suppose that (\ref{eq:charac:EE})
has a purely imaginary root $b\, i$, with $b > 0$. Separating
the real and imaginary parts in (\ref{eq:charac:EE}), we have
\begin{equation}
\label{eq:charac:EE:nuI:pos}
{b}^{8}+ 281.828573\,{b}^{6}- 51.906667\,{b}^{4}- 1.236501\,{b}^{2}
- 0.000246 = 0 \, .
\end{equation}
It is easy to verify that $b = 0.453220$ is a root of equation
(\ref{eq:charac:EE:nuI:pos}). Thus, by Rouch\'{e}'s theorem, there exists
at least a time delay $d_I > 0$ such that the endemic equilibrium $E^*$
is unstable. In the specific case $d_I = 0.1$,
the characteristic equation is given by
\begin{multline}
\label{eq:carac:EE:nuI:01}
{\lambda}^{4}+ 15.112395\,{\lambda}^{3}
-12.243801\,{\lambda}^{2} - 28.331139\,\lambda - 0.966336 \\
+ \left( 2\,{\lambda}^{3}+  30.196179\,{\lambda}^{2}+ 30.244462\,
\lambda+  1.463482 \right) {{\rm e}^{-0.1\,\lambda}}= 0 \, .
\end{multline}
Similarly to Remark~\ref{obs1}, it follows from Bolzano's theorem
and the monotonicity of the characteristic function associated
to (\ref{eq:carac:EE:nuI:01}) that all roots of equation
(\ref{eq:carac:EE:nuI:01}) have a negative real part.
Therefore, the endemic equilibrium $E^*$ is locally asymptotically stable
for $d_I = 0.1$ and $R_0 > 1$.
\begin{table}
\centering
\begin{tabular}{|l|l|l|}
\hline
{\small{Symbol}} & {\small{Description}}  & {\small{Value}} \\
\hline
{\small{$\beta$}} & {\small{Transmission coefficient}}  & {\small{$\in [50,150]$}}\\
{\small{$\mu$}} & {\small{Death and birth rate}}  & {\small{$1/70 \, yr^{-1}$}}\\
{\small{$\delta$}} & {\small{Rate at which individuals leave $L_1$}}
& {\small{$12 \, yr^{-1}$}}\\
{\small{$\phi$}} & {\small{Proportion of individuals going to $I$}}
& {\small{$0.05$}}\\
{\small{$\omega$}}
& {\small{Endogenous reactivation rate for persistent latent infections}}
& {\small{$0.0002 \, yr^{-1}$}}\\
{\small{$\omega_R$}}
& {\small{Endogenous reactivation rate for treated individuals}}
&{\small{$0.00002 \, yr^{-1}$}}\\
{\small{$\sigma$}}
& {\small{Factor reducing the risk of infection as a result of acquired}} & \\
& {\small{immunity to a previous infection for $L_2$}} & {\small{$0.25$}} \\
{\small{$\sigma_R$}} & {\small{Rate of exogenous reinfection of treated patients}}
& {\small{0.25}} \\
{\small{$\tau_0$}} & {\small{Rate of recovery under treatment of active TB}}
&  {\small{$2 \, yr^{-1}$}}\\
{\small{$\tau_1$}}
& {\small{Rate of recovery under treatment of early latent individuals $L_1$}}
&  {\small{$2 \, yr^{-1}$}}\\
{\small{$\tau_2$}}
& {\small{Rate of recovery under treatment of persistent latent individuals $L_2$}}
& {\small{$1 \, yr^{-1}$}}\\
{\small{$N$}} & {\small{Total population}} & {\small{$30,000$}} \\
{\small{$T$}} & {\small{Total simulation duration}} & {\small{$5$ $yr$}} \\
{\small{$\epsilon_1$}}
& {\small{Efficacy of treatment of early latent $L_1$}} & {\small{$0.5$}} \\
{\small{$\epsilon_2$}}
& {\small{Efficacy of treatment of persistent latent TB $L_2$}}
& {\small{$0.5$}}\\ \hline
\end{tabular}
\caption{Parameter values.}
\label{table:parameters}
\end{table}


\section{Optimal control of a tuberculosis model with state and control delays}
\label{sec:5}

We now consider the TB model \eqref{TBmodel:delay:state} with a time delay
in the state variable $I(t)$ and introduce two control functions
$u_1(\cdot)$ and $u_2(\cdot)$ and two real positive model constants
$\epsilon_1$ and $\epsilon_2$. The control $u_1$ represents the effort
on early detection and treatment of recently infected individuals $L_1$
and the control $u_2$ represents the application of chemotherapy
or post-exposure vaccine to persistent latent individuals $L_2$.
The parameters $\epsilon_i \in (0, 1)$, $i=1, 2$,
measure the effectiveness of the controls $u_i$, $i=1, 2$, respectively,
\textrm{i.e.}, these parameters measure the efficacy of post-exposure
interventions for early and persistent latent TB individuals, respectively.
Since after TB infection the human immune system can take from 2 to 8 weeks
to react, it takes at least the same time to detect  the infection  by the medical
test \cite{url:center:diseases}. Hence, we introduce a time delay $d_{u_1}$
in the control $u_1$ which represents the delay in the diagnosis of latent TB
and commencement of latent TB treatment. The prophylactic treatment
of persistent latent individuals may also suffer from a delay due to personal
reasons of the patient, who may be resistant to treatment having spent more
than two years with latent infection without passing to active disease, or the delay
may be caused by priorities given to early latent and active infectious individuals
from the health care system. Based on these facts, for numerical simulations
we shall consider the following delays with bounds on the control delays:
\begin{equation}
\label{delays}
d_I = 0.1,  \quad d_{u_1}, \, d_{u_2} \in [0.05,0.2]\,.
\end{equation}
The resulting model is given by the following system
of nonlinear ordinary delay differential equations:
\begin{equation}
\label{TBmodel:controls:delays}
\hspace{-3mm}
\begin{cases}
\dot{S}(t) = \mu N - \frac{\beta}{N} I(t) S(t) - \mu S(t),\\[0.3mm]
\dot{L_1}(t) = \frac{\beta}{N} I(t)\left( S(t)
+ \sigma L_2(t) + \sigma_R R(t)\right) - \left(\delta + \tau_1
+ \epsilon_1 u_1(t-d_{u_1}) + \mu\right)L_1(t),\\[0.3mm]
\dot{I}(t) = \phi \delta L_1(t) + \omega L_2(t) + \omega_R R(t)
- \tau_0 I(t - d_I) - \mu I(t),\\[0.3mm]
\dot{L_2}(t) = (1 - \phi) \delta L_1(t) - \sigma \frac{\beta}{N} I(t) L_2(t)
- (\omega + \epsilon_2 u_2(t-d_{u_2}) + \tau_2 + \mu)L_2(t).
\end{cases}
\end{equation}
Recall that the recovered population is determined
by $R(t) = N - (S(t)+L_1(t)+I(t)+L_2(t))$,
which formally gives the equation
\begin{eqnarray*}
\dot{R}(t) &= & \tau_0 I(t - d_I) + (\tau_1 + \epsilon_1 u_1(t-d_{u_1})) L_1(t)
+ (\tau_2 + \epsilon_2 u_2(t-d_{u_2})) L_2(t)\\
&& - \sigma_R \frac{\beta}{N} I(t) R(t) - (\omega_R + \mu)R(t) \, .
\end{eqnarray*}
Note, however, that this equation is  not needed in the subsequent optimal
control computations. We prescribe the following initial conditions for the
state variables $(S,L_1,L_2)$ and, due to the delays, initial functions
for the state variable $I$ and controls $u_1$ and $u_2$:
\begin{equation}
\label{initial-condition}
 \hspace{0mm}
\begin{array}{l}
S(0) = (76/120)N, \; L_1(0) = (36/120) N , \;
L_2(0) =(2/120)N, \; R(0) = (1/120)N , 
\\[0.5mm]
I(t) = (5/120)N \;\; \mbox{for} \; -d_I \leq t \leq 0,
\; u_k(t) =0  \;\; \mbox{for} \; -d_{u_k} \leq t < 0 \;\; (k=1,2).
\end{array}
\end{equation}
In the case $d_{u_1} = d_{u_2} = 0 $ of no control delays, the last condition
is void. The following box constraints are imposed on the control variables:
\begin{equation}
\label{control-constraints}
0 \leq u_k(t) \leq 1 \quad \forall \; t \in [0,T] \quad (k=1,2).
\end{equation}
Let us denote the state and control variable of the control system
\eqref{TBmodel:controls:delays}, respectively, by
$x=(S,L_1,I,L_2) \in \mathbb{R}^4$ and $u =(u_1,u_2) \in \mathbb{R}^2$.
We shall consider two types of objectives: either the $L^1$--type objective
\begin{equation}
\label{J1-objective}
J_1(x,u)  = \int_0^{T} (I(t) + L_2(t) + W_1\,u_1(t) + W_2\,u_2(t))\,dt \,,
\end{equation}
which is linear in the control variable $u$, or the $L^2$--type objective
\begin{equation}
\label{J2-objective}
J_2(x,u)  = \int_0^{T} (I(t) + L_2(t) + W_1\,u_1^2(t) + W_2\,u_2^2(t))\,dt \,,
\end{equation}
which is quadratic in the control variable. In both objectives, $W_1 > 0$,
$W_2 > 0$ are appropriate weights to be chosen later. $L^2$-type functionals
like \eqref{J2-objective} are often used in economics to describe, e.g.,
productions costs, but are not appropriate in a biological framework; cf.
the remarks in \cite{Schaettler-Ledzewicz-Maurer}. The $L^1$ functional
$J_1(x,u)$ incorporates  the total amount of drug used as a penalty
and thus appears to be more realistic.  For that reason, we shall mainly focus
on the functional $J_1(x,u)$.

The optimal control problem then is defined as follows: determine a control
function $u=(u_1,u_2) \in L^1([0,T],\mathbb{R}^2)$ that {\it minimizes} either
the cost functional $J_1(x,u)$ in \eqref{J1-objective} or $J_2(x,u)$
in \eqref{J2-objective} subject to the dynamic constraints
\eqref{TBmodel:controls:delays}, initial conditions \eqref{initial-condition}
and control constraints \eqref{control-constraints}. Necessary optimality
conditions for optimal control problems with multiple time delays in control
and state variables may be found, e.g., in \cite{Goellmann-Maurer-14}.
Here, we discuss the {\it Maximum Principle} in order to display the controls
and the switching functions in a convenient way.
To define the Hamiltonian for the delayed control problem,
we introduce the delayed state variable $y_3(t) = x_3(t-d_I) = I(t-d_I)$ and
the delayed control variables $v_k(t) = u_k(t-d_{u_1})$, $k=1,2$.
Using the adjoint variable 
$\lambda = (\lambda_S,\lambda_{L_1},\lambda_I,\lambda_{L_2}) \in \mathbb{R}^4$,
the Hamiltonian for the objective $J_1$ and the control system
\eqref{TBmodel:controls:delays} is given by
\begin{equation*}
\begin{array}{l}
H(x,y_3,\lambda,u_1,u_2,v_1,v_2)
= -(I + L_2+W_1 u_1 + W_2 u_2)
+ \lambda_S (\mu N - \frac{\beta}{N} I S - \mu S )\\[1mm]
\quad + \lambda_{L_1} ( \frac{\beta}{N} ( S + \sigma L_2 + \sigma_R R)
- (\delta + \tau_1 + \epsilon_1 v_1 + \mu )L_1)\\[1mm]
\quad + \lambda_I (\phi \delta L_1 + \omega L_2 + \omega_R R
- \tau_0 y_3 - \mu I ) \\[1mm]
\quad + \lambda_{L_2} ((1 - \phi) \delta L_1- \sigma \frac{\beta}{N} I L_2
- (\omega + \epsilon_2 v_2 + \tau_2 + \mu)L_2 ) .
\end{array}
\end{equation*}
We obtain the adjoint equations
$\dot{\lambda}_S(t) = - H_S [t]$,
$\dot{\lambda}_{L_1}(t) = - H_{L_1} [t]$,
$\dot{\lambda}_{L_2}(t) = - H_{L_2} [t]$, and
$\dot{\lambda}_I(t) =  - H_I [t] + \chi_{\,[0,T-d_I]}\, H_{y_3}[t+d_I]$,
where subscripts denote partial derivatives and $\chi_{\,[0,T-d_I]}$
is the characteristic function in the interval $[0,T-d_I]$
\cite{Goellmann-Maurer-14}. Note that only the equation for
$\dot{\lambda}_I(t)$ contains the {\it advanced} time $t+d_I$.
Since the terminal state $x(T)$ is free, the transversality conditions are
\begin{equation}
\label{transversality}
\lambda_S(T)= \lambda_{L_1}(T)= \lambda_I(T)= \lambda_{L_2}(T)= 0 .
\end{equation}
To characterize the optimal controls $u_1$ and $u_2$, we introduce
the following {\it switching functions} for $k=1,2$:
\begin{equation}
\label{eq:switch}
\begin{array}{rl}
\phi_k(t) &= \; H_{u_k}[t] + \chi_{\,[0,T-d_{u_k}]}(t+d_{u_k})\, 
H_{v_k}[t+d_{u_k}]\\[1mm]
& = \;
\left\{
\begin{array}{lcl}
-W_k - \epsilon_k \lambda_{L_k}(t+d_{u_k})  L_k(t+d_{u_k})
& \mbox{for} & 0 \leq t \leq T-d_{u_k},\\[1mm]
-W_k & \mbox{for} &  T-d_{u_k} \leq t \leq T.
\end{array}
\right.
\end{array}
\end{equation}
Then the maximum condition for the optimal controls $u_1(t)$, $u_2(t)$
is  equivalent to the maximization $\,\phi_k(t)u_k(t)
= \max_{\, 0 \leq u_k \leq 1} \; \phi_k(t) u_k$, $k=1,2$,
which gives the control law
\begin{equation}
\label{control-law}
u_k(t) = \left\{
\begin{array}{rcl}
1 &&\mbox{if} \quad \phi_k(t) > 0,  \\[1mm]
0 && \mbox{if} \quad \phi_k(t) < 0,  \\[1mm]
{\rm singular} &&  \mbox{if} \quad \phi_k(t) = 0
\;\; \mbox{on} \; I_s \subset [0,T],
\end{array}
\right.
\quad k=1,2.
\end{equation}
We do not discuss singular controls further, since both in the non-delayed
and the delayed control problem we did not find singular arcs. In view of the
transversality conditions \eqref{transversality}, the terminal value of the
switching function is $\phi_k(T) = -W_k$ for $k=1,2$. Hence, we may conclude
from the control law \eqref{control-law} that $u_1(t) = u_2(t)= 0$ holds
on a terminal interval.


\section{Numerical results for the non-delayed and the delayed optimal control problem}
\label{sec:6}

We choose the numerical approach ``First Discretize then Optimize''
to solve both the non-delayed and delayed optimal control problem.
The discretization of the control problem on a fine grid 
leads to a large-scale nonlinear programming problem (\textsc{NLP}) 
that can be conveniently formulated with the help 
of the Applied Modeling Programming Language \textsc{AMPL} \cite{Fourer}. 
\textsc{AMPL} can be linked to several powerful optimization solvers. 
We use the Interior-Point optimization solver \textsc{IPOPT}
developed by W\"achter and Biegler \cite{Waechter-Biegler}.
Details of discretization methods for delayed control problems may 
be found in \cite{Goellmann-Maurer-14}. The subsequent computations 
for the terminal time $t_f=5$ have been performed with 
$N=2500$ to $N=5000$ grid points using the trapezoidal rule 
as integration method. Choosing the error tolerance $tol = 10^{-10}$ 
in \textsc{IPOPT}, we can expect that the state variables are correct 
up to $7$ or $8$ decimal digits. Since Lagrange multipliers are computed 
a posteriori in \textsc{IPOPT}, we cannot expect more than 6 correct 
decimal digits in the adjoint variables. 

Also, the control package \textsc{NUDOCCCS} developed 
by B\"uskens \cite{Bueskens} (cf. also \cite{Bueskens-Maurer}) 
provides a highly efficient method for solving the discretized control problem,
because it allows to implement higher order integration methods. 
However, so far \textsc{NUDOCCCS} can only be implemented for 
non-delayed control problems. For the non-delayed TB control problem, 
we obtained only bang-bang controls. An important feature of \textsc{NUDOCCCS} 
is the fact that it provides an efficient method for optimizing the switching 
times of bang-bang controls using the arc-parametrization method 
\cite{Maurer-etal-OCAM}. This approach is called the  
Induced Optimization Problem (\textsc{IOP}) for bang-bang controls. 
\textsc{NUDOCCCS} then allows for a check of second-order
sufficient conditions of the \textsc{IOP}, whereby the second-order 
sufficient conditions for bang-bang controls can be verified 
with high accuracy; cf. \cite{Maurer-etal-OCAM,Osmolovskii-Maurer-BOOK}.


\subsection{Optimal control solution of the non-delayed TB model}

First, we consider the optimal control of non-delayed TB model, where formally
we put $d_I=d_{u_1}=d_{u_2}=0$. The numerical solutions serve as reference
solutions, which later will be compared with the solutions to the delayed
control problem. We choose the weights $W_1 = W_2 = 50$ in the objective
(\ref{J1-objective}) and the parameter $\beta = 100$ ($R_0 = 2.2$) in
Table~\ref{table:parameters}. The discretization approach shows that controls
$u_k(t)$ are bang-bang and with only one switch at $t_k$, $k=1,2$:
\begin{equation}
\label{optimal-control-nondelayed}
u_k(t) =
\left\{ \begin{array}{rcl}
1 & \quad \mbox{for} & 0 \leq t \leq t_k,  \\
0 & \quad \mbox{for} & t_k <  t \leq T,
\end{array} 
\right.
\quad k=1,2.
\end{equation}
To obtain a refined solution, we solve the \textsc{IOP} with respect to the
switching times $t_1$ and $t_2$ using the arc-parametrization method
\cite{Maurer-etal-OCAM} and  the code \textsc{NUDOCCCS} \cite{Bueskens}.
We get the following numerical results:
\begin{equation}
\label{results-nondelayed}
\hspace{-3mm}
\begin{array}{rclrclrcl}
J_1(x,u)&=& 28390.73, & \quad t_1 &=& 3.677250, & \quad t_2 &=&  4.866993,\\
S(T) &=&  1034.634 , & \quad L_1(T) &=& 53.59586 , & \quad I(T) &=& 25.89556,\\
L_2(T) &=& 780.7667  , & \quad R(T) &=&  28105.11 .
\end{array}
\end{equation}
The initial value of the adjoint variable 
$\lambda = (\lambda_s,\lambda_{L_1},\lambda_I,\lambda_{L_2})$
is computed as
$$
\lambda(0) = (0.376159, 0.452761, 4.03059, 0.394839).
$$
The control and state trajectories are displayed in Figure~\ref{label:of:fig:2}.
\begin{figure}[ht!]
\centering
\includegraphics[width=0.31\textwidth]{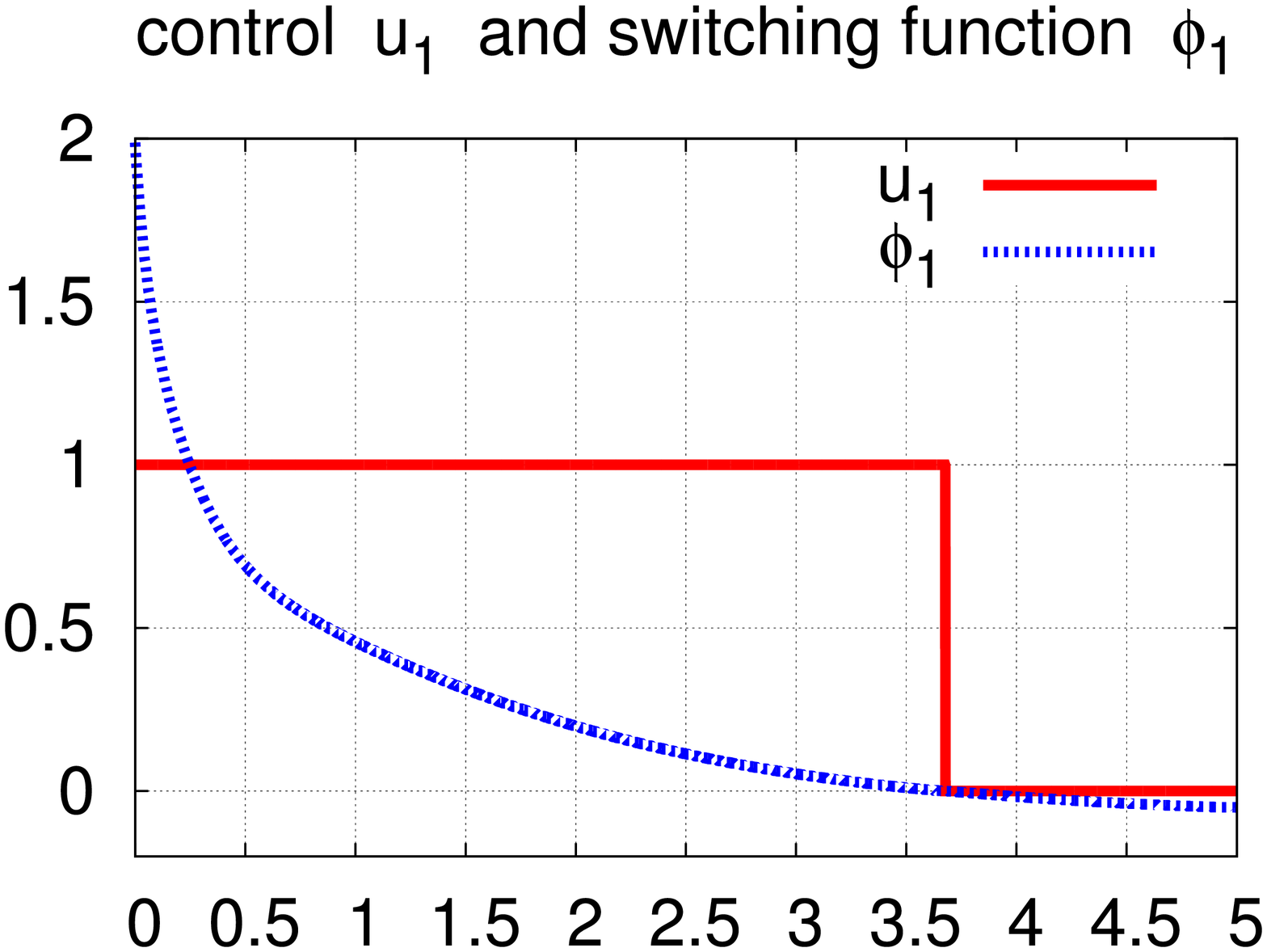}
\hspace{2mm} \includegraphics[width=0.31\textwidth]{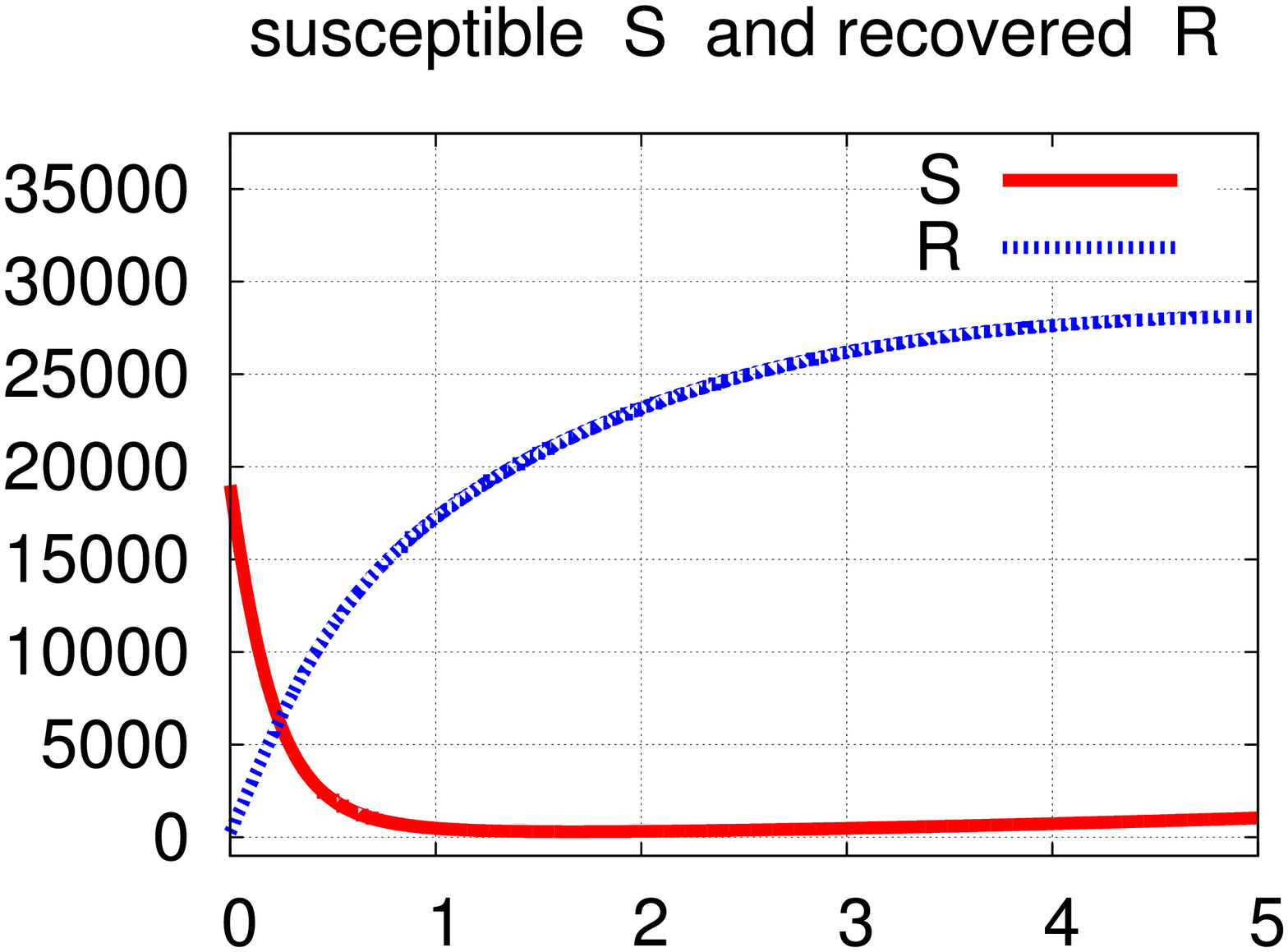}
\hspace{2mm}\includegraphics[width=0.31\textwidth]{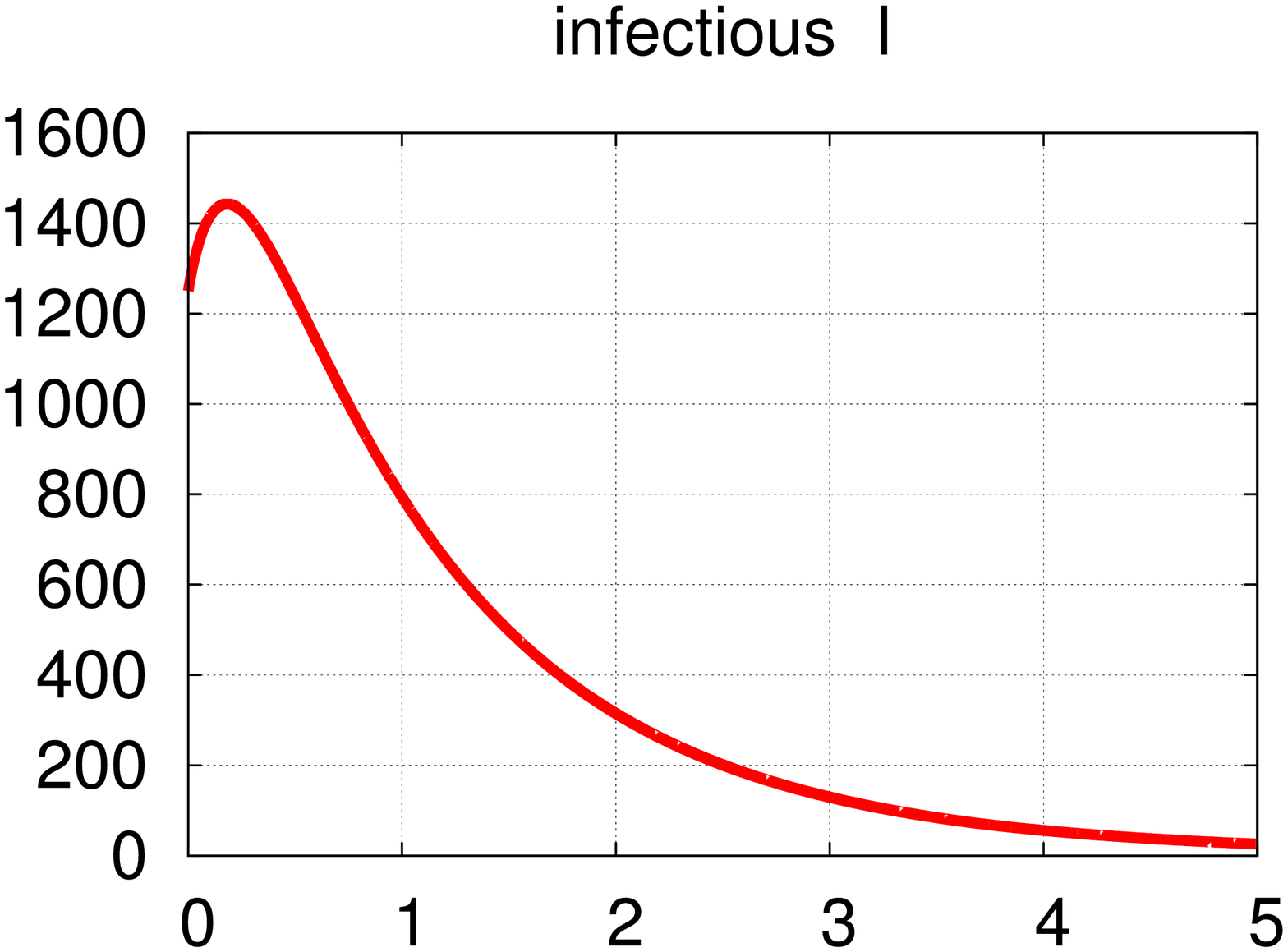}\\[0.2cm]
\includegraphics[width=0.31\textwidth]{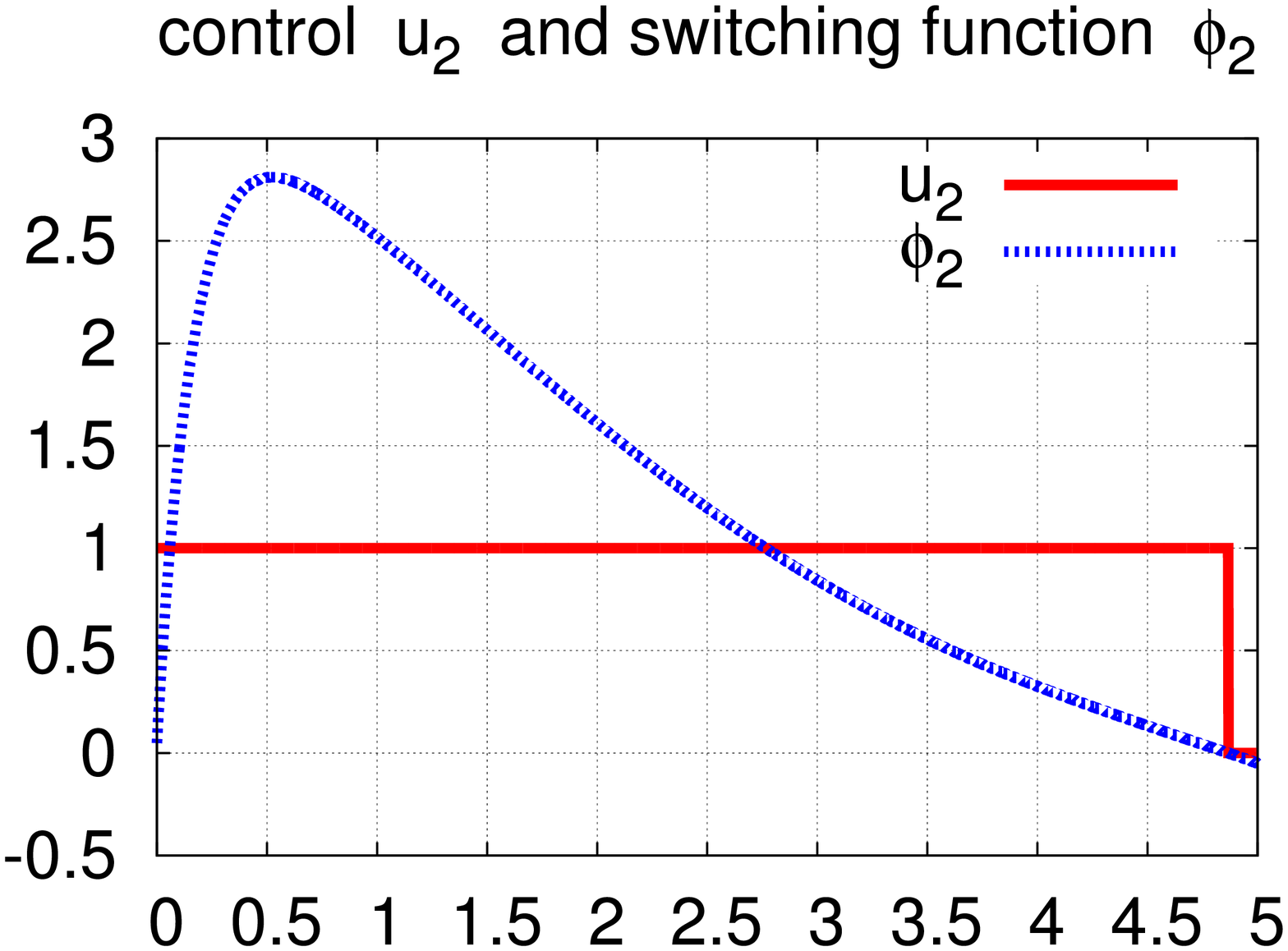}
\hspace{2mm} \includegraphics[width=0.31\textwidth]{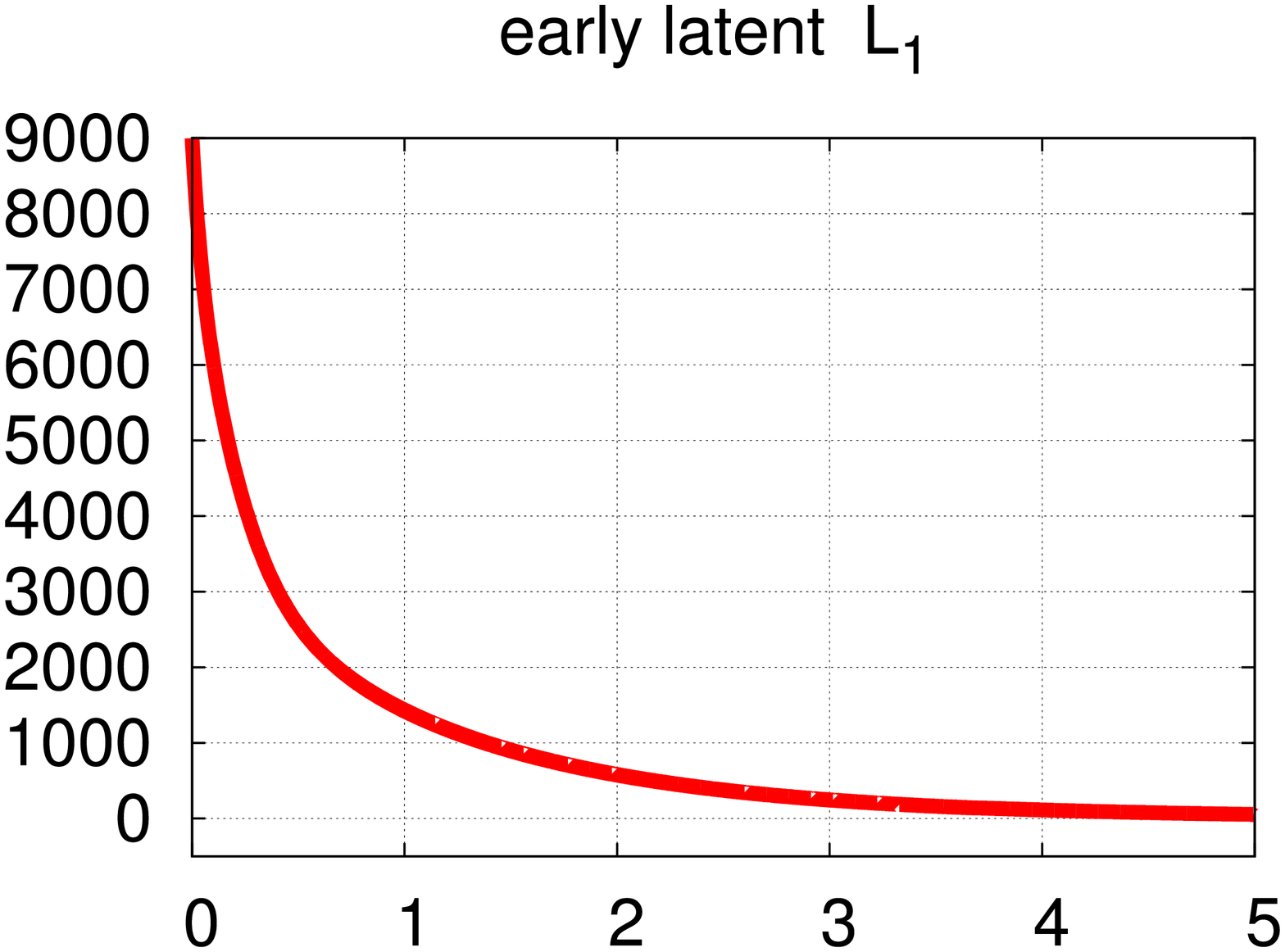}
\hspace{2mm}\includegraphics[width=0.31\textwidth]{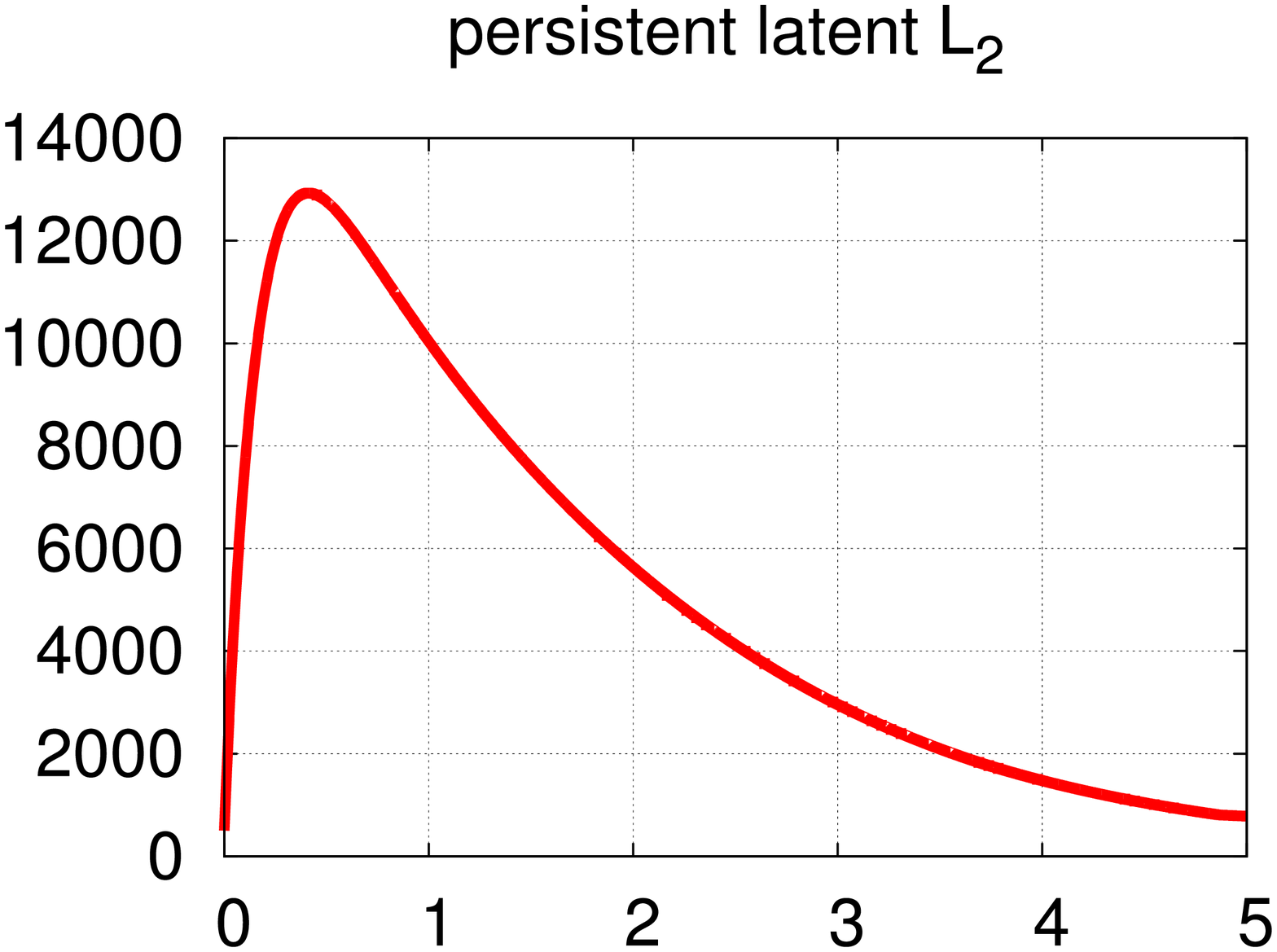}
\caption{Optimal control and state variables of the non-delayed TB model
with $L^1$ objective \eqref{J1-objective} and weights $W_1=W_2=50$.
{\it Top row}: (a) control $u_1$ \eqref{optimal-control-nondelayed}
and (scaled) switching function $\phi_1$ \eqref{eq:switch}
satisfying the control law \eqref{control-law} for $k=1$,
(b) susceptible individuals $S$ and recovered individuals $R$,
(c) infectious individuals $I$.
{\it Bottom row}: (a) control $u_2$ \eqref{optimal-control-nondelayed}
and (scaled) switching function $\phi_2$ \eqref{eq:switch}
satisfying the control law \eqref{control-law} for $k=2$,
(b) early latent  $L_1$, (c) persistent latent $L_2$.}
\label{label:of:fig:2}
\end{figure}
The Hessian $HessL$ of the Lagrangian for the \textsc{IOP} is positive definite:
$HessL = \left(
\begin{array}{ll}
453.98    &    387.42 \\
387.42    &    385.69
\end{array} \right) > 0$.
It can be seen from Figure~\ref{label:of:fig:2}, top row (a) and bottom row (a),
that the switching functions satisfy the strict bang-bang property
(cf. \cite{Maurer-etal-OCAM,Osmolovskii-Maurer-BOOK}) corresponding
to the Maximum Principle: $\phi_k(t) > 0$ for $0 \leq t < t_k$,
$\dot\phi_k(t_k) <0$, $\phi_k(t) < 0$ for $t_k < t \leq T \,(k=1,2)$.
Hence, the bang-bang controls \eqref{optimal-control-nondelayed} characterized
by the data \eqref{results-nondelayed} satisfies the second-order
sufficient conditions in \cite[Chap.~7]{Osmolovskii-Maurer-BOOK}
and thus provides a strict strong minimum. Figure~\ref{label:of:fig:3} displays
the comparison of the optimal controls for the functionals $J_1(x,u)$
 \eqref{J1-objective} and $J_2(x,u)$ \eqref{J2-objective}. The state
variables are nearly identical, since the control variables differ only
a terminal interval. Also the objective values are very close:
$J_1(x,u) =  28390.73$, $J_2(x,u) = 28382.37$. Note that the controls
for $J_2(x,u)$ are {\it continuous}, since the strict Legendre--Clebsch
condition holds and the Hamiltonian has a unique minimum. The proof, that
second-order sufficient conditions (SSC) are satisfied for the controls
corresponding to $J_2(x,u)$, is quite elaborated since it requires to
test whether an associated matrix Riccati equation has a bounded solution;
cf. \cite{Osmolovskii-Maurer-BOOK}.
\begin{figure}[ht!]
\centering
\includegraphics[width=0.40\textwidth]{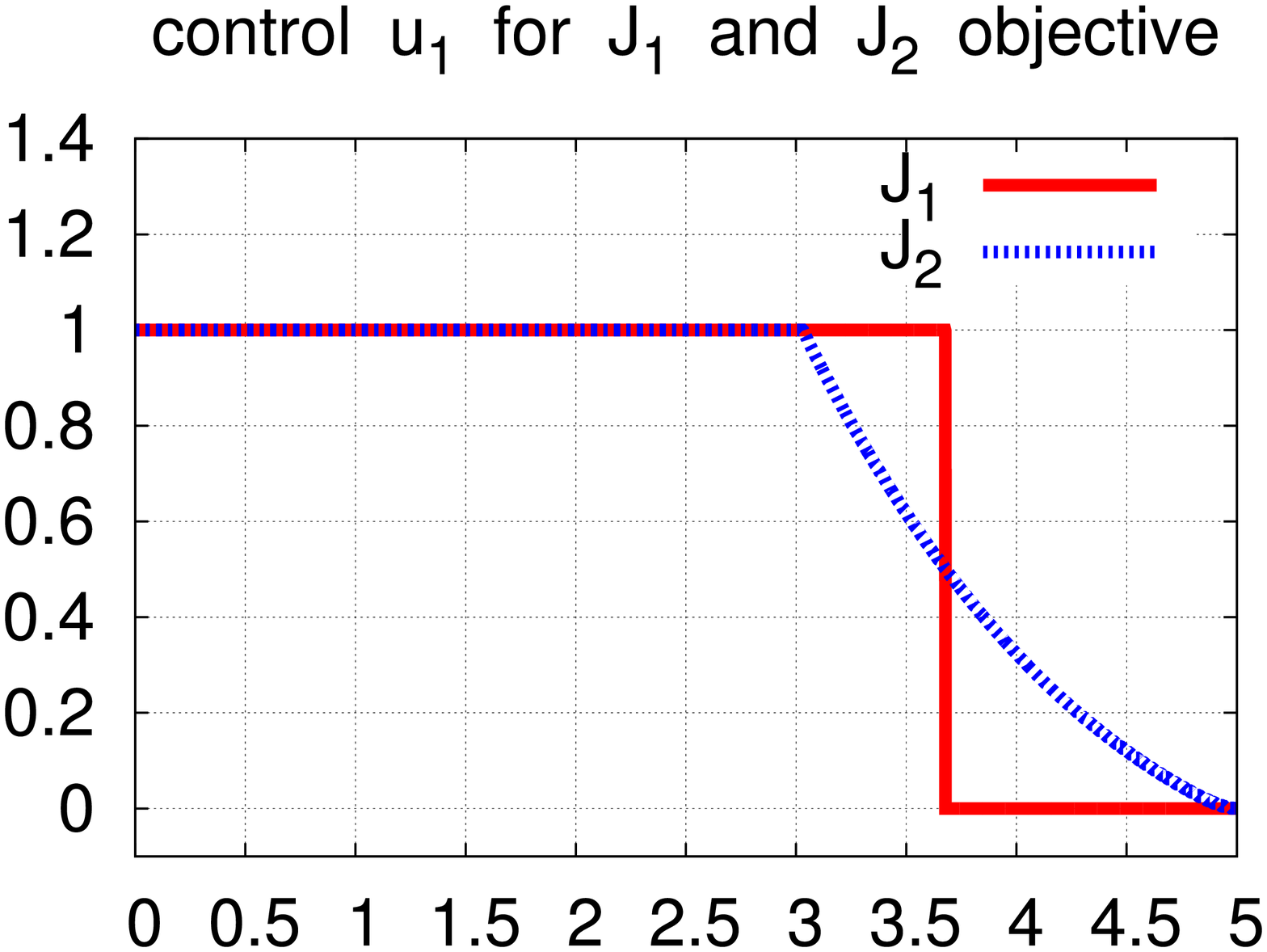}
\hspace{10mm}
\includegraphics[width=0.40\textwidth]{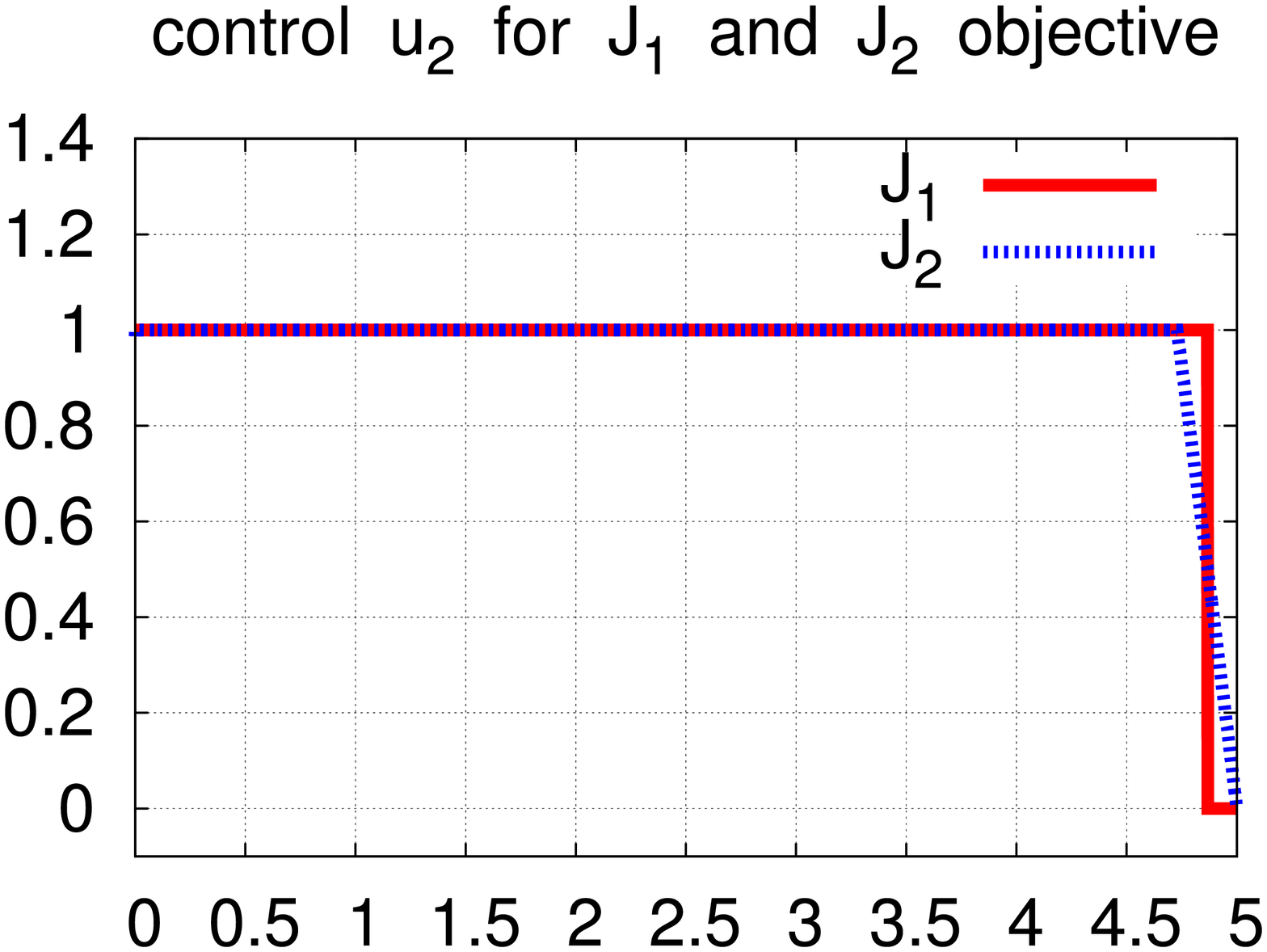}
\caption{Comparison of controls $u_1$ and $u_2$ for the $L^1$-type objective
\eqref{J1-objective} and $L^2$-type objective \eqref{J2-objective}
with weights $W_1=W_2 =50$.}
\label{label:of:fig:3}
\end{figure}
When we increase the weights $W_1$ and $W_2$, the control $u_1$ stays to be
bang-bang with only one switching time which gets smaller, whereas the control
$u_2$ may have an additional zero arc at the beginning. E.g., for
$W_1=W_2 = 150$ we obtain the controls
\begin{equation*}
(u_1(t),u_2(t)) =
\left\{
\begin{array}{rcl}
(1,0) & \quad \mbox{for} & 0 \leq t \leq t_1, \\
(1,1) & \quad \mbox{for} & t_1 <  t \leq t_2,\\
(0,1) & \quad \mbox{for} & t_2 <  t \leq t_3,\\
(0,0) & \quad \mbox{for} & t_3 <  t \leq T,
\end{array} 
\right.
\quad k=1,2 .
\end{equation*}
The objective value and the switching times are computed as
$J_1(x,u) =  29175.97$, $t_1 = 0.00260$, $t_2 = 2.662$,
and $t_3 = 4.633$. The optimal controls
are shown in Figure~\ref{label:of:fig:4}.
\begin{figure}[ht!]
\centering
\includegraphics[width=0.40\textwidth]{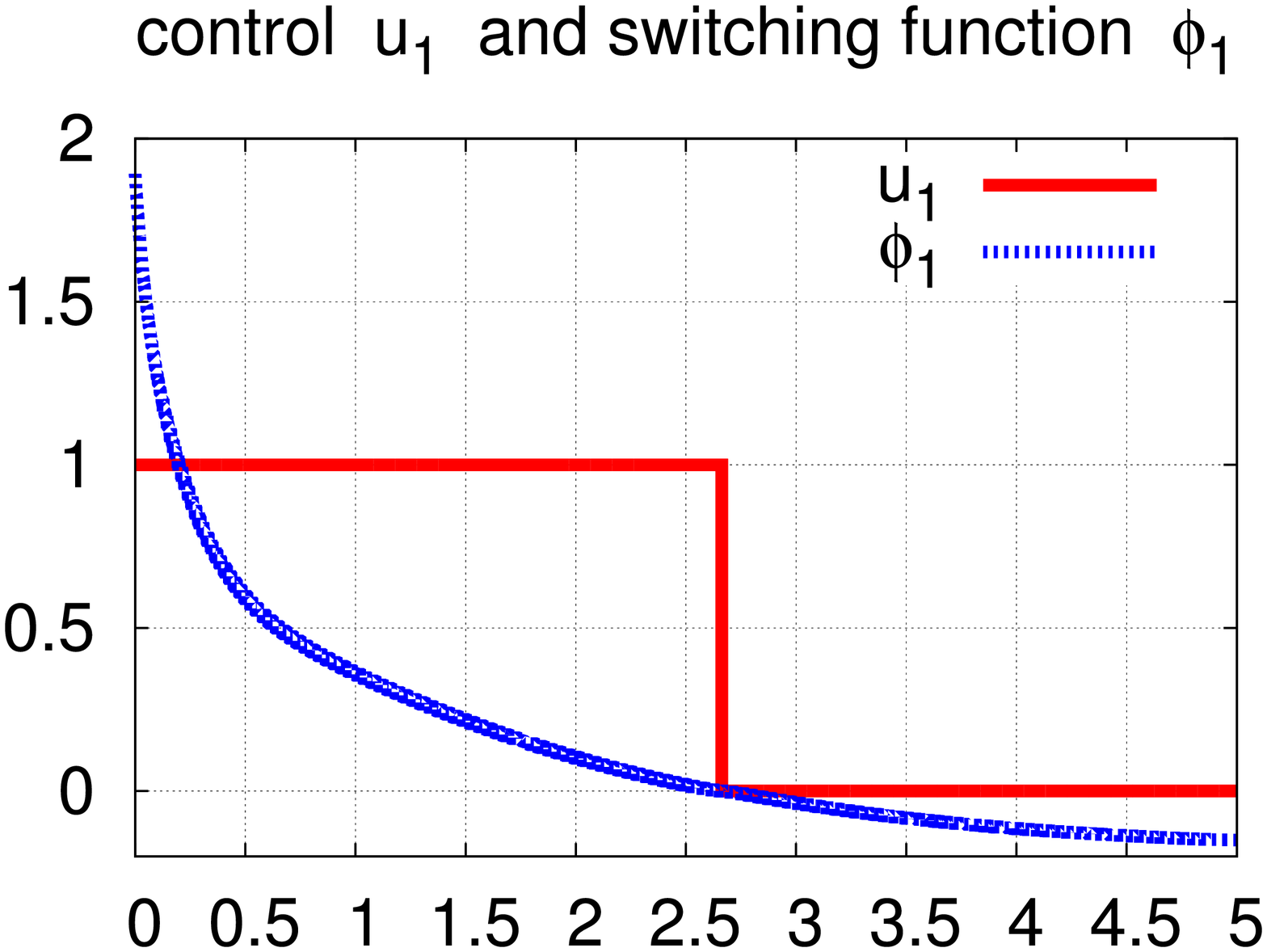}
\hspace{8mm}
\includegraphics[width=0.40\textwidth]{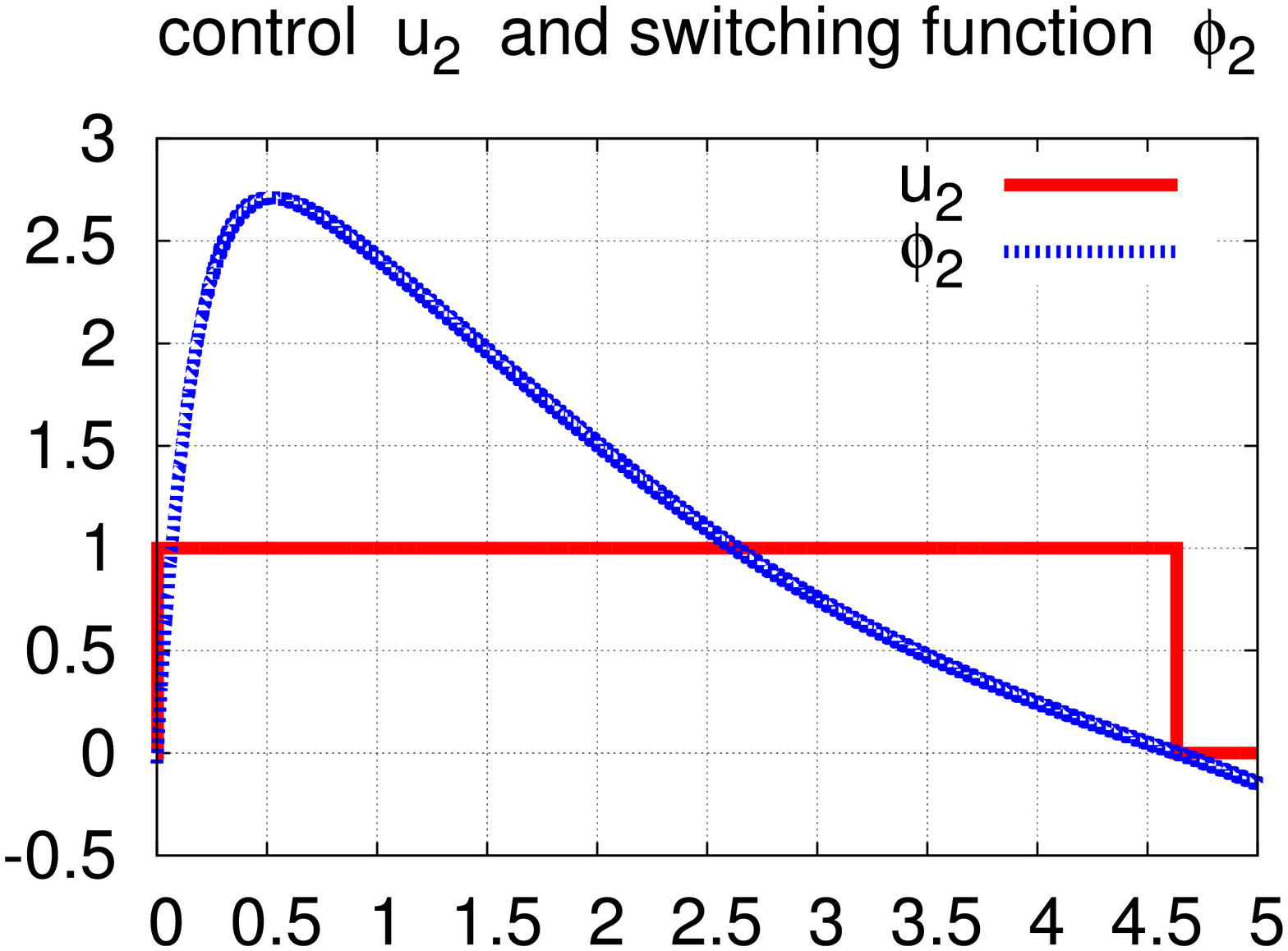}
\caption{Optimal controls $u_1$ and $u_2$ for the $L^1$-type objective
\eqref{J1-objective} with weights $W_1=W_2 =150$.}
\label{label:of:fig:4}
\end{figure}


\subsection{Optimal control solution of the TB model with control and state delays}

To see more distinctively the difference between delayed and non-delayed
solutions, we consider state and control delays with values at their upper
bounds in \eqref{delays}, that is, $d_I = 0.1$, $d_{u_1} = 0.2$,
and $d_{u_2} = 0.2$. Again, we choose the weights $W_1 = W_2 = 50$
in the objective (\ref{J1-objective}) and the parameter $\beta = 100$
in Table~\ref{table:parameters}, for which $R_0 = 2.2$. 
The discretization approach with $N=5000$
grid points and the trapezoidal rule as integration method
yields the following bang-bang controls with only one switch
as in the non-delayed case:
\begin{equation}
\label{eq:controlLawDelay}
u_k(t) =
\left\{
\begin{array}{rcl}
1 & \mbox{for} & 0 \leq t \leq t_k,  \\
0 & \mbox{for} & t_k <  t \leq T,
\end{array} 
\right.
\quad k=1,2.
\end{equation}
We obtain the numerical results $J_1(x,u)= 26784.60$, $t_1 = 3.108$,
$t_2 = 4.581$, $S(T) =  1234.598$, $L_1(T) = 24.93928$, $I(T) =  11.71451$,
$L_2(T) = 469.8865$, and $R(T) = 28258.86$. However, in contrast to the
non-delayed case, we are not able to optimize the switching times directly
because the time-transformation in the arc-parametrization method
\cite{Maurer-etal-OCAM} cannot be applied to the delayed problem. The initial
value of the adjoint variable is computed as
$\lambda(0) = (0.3789, 0.4682, 3.6412, 0.4263)$. When comparing the results
for the delayed problem with those in \eqref{results-nondelayed} for the
non-delayed problem, we notice the surprising fact that the terminal values
$L_1(T)$, $I(T)$, $L_2(T)$ and the switching times $t_1$, $t_2$ are
significantly smaller in the delayed problem on the expense that the terminal
value $S(T)$ is significantly higher.
\begin{figure}[ht!]
\centering
\includegraphics[width=0.32\textwidth]{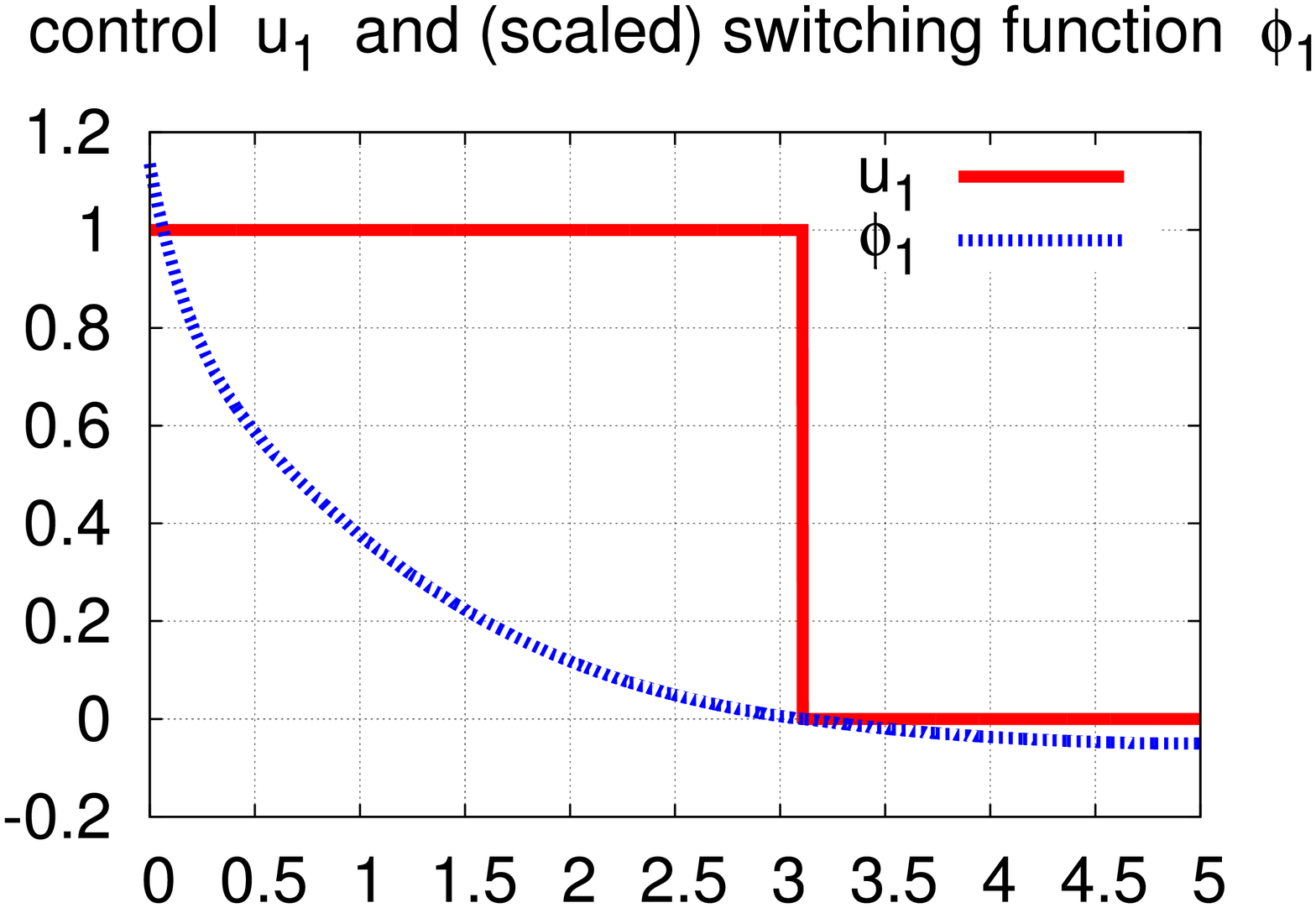}
\hspace{2mm}
\includegraphics[width=0.29\textwidth]{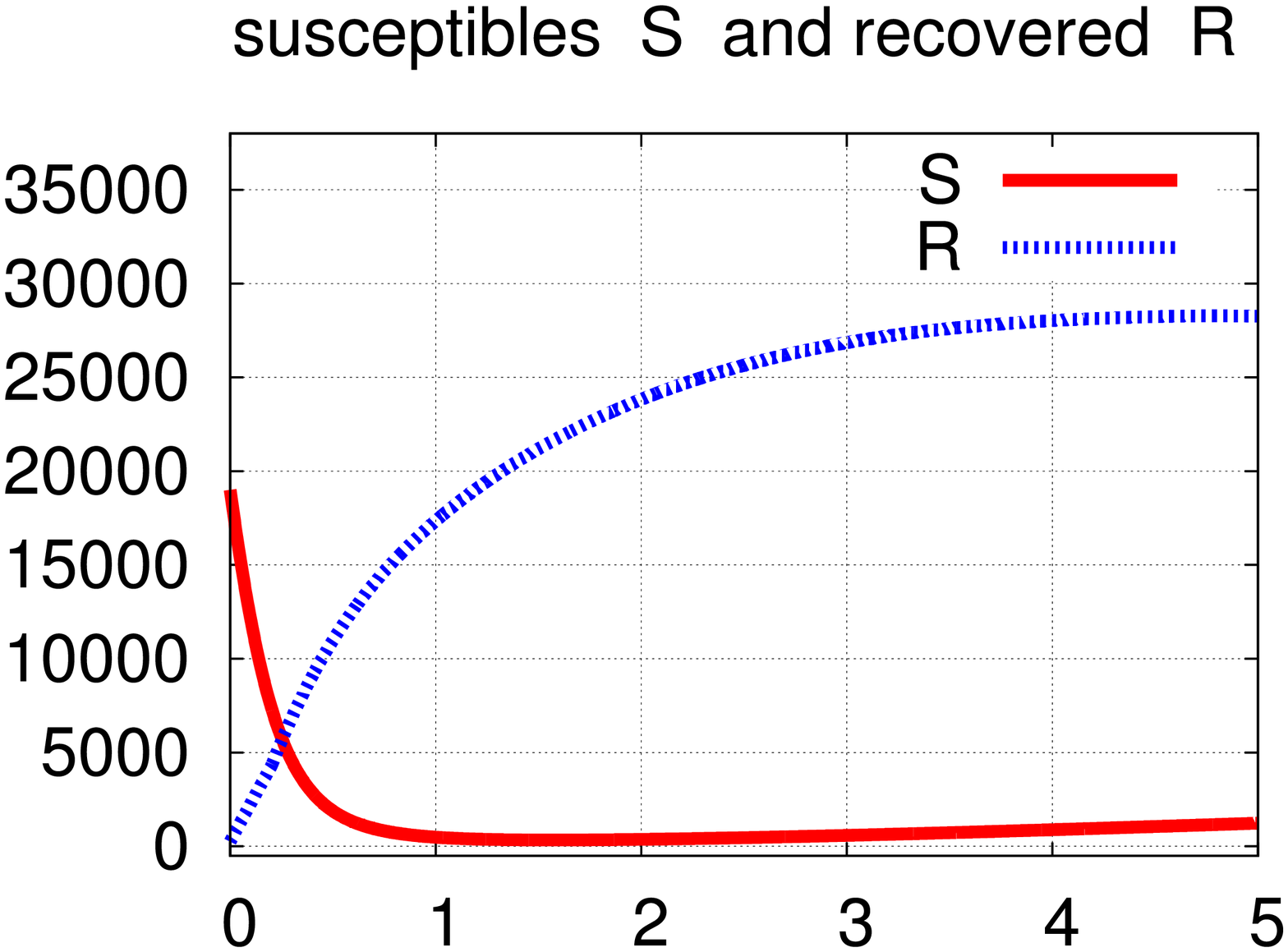}
\hspace{3mm}
\includegraphics[width=0.29\textwidth]{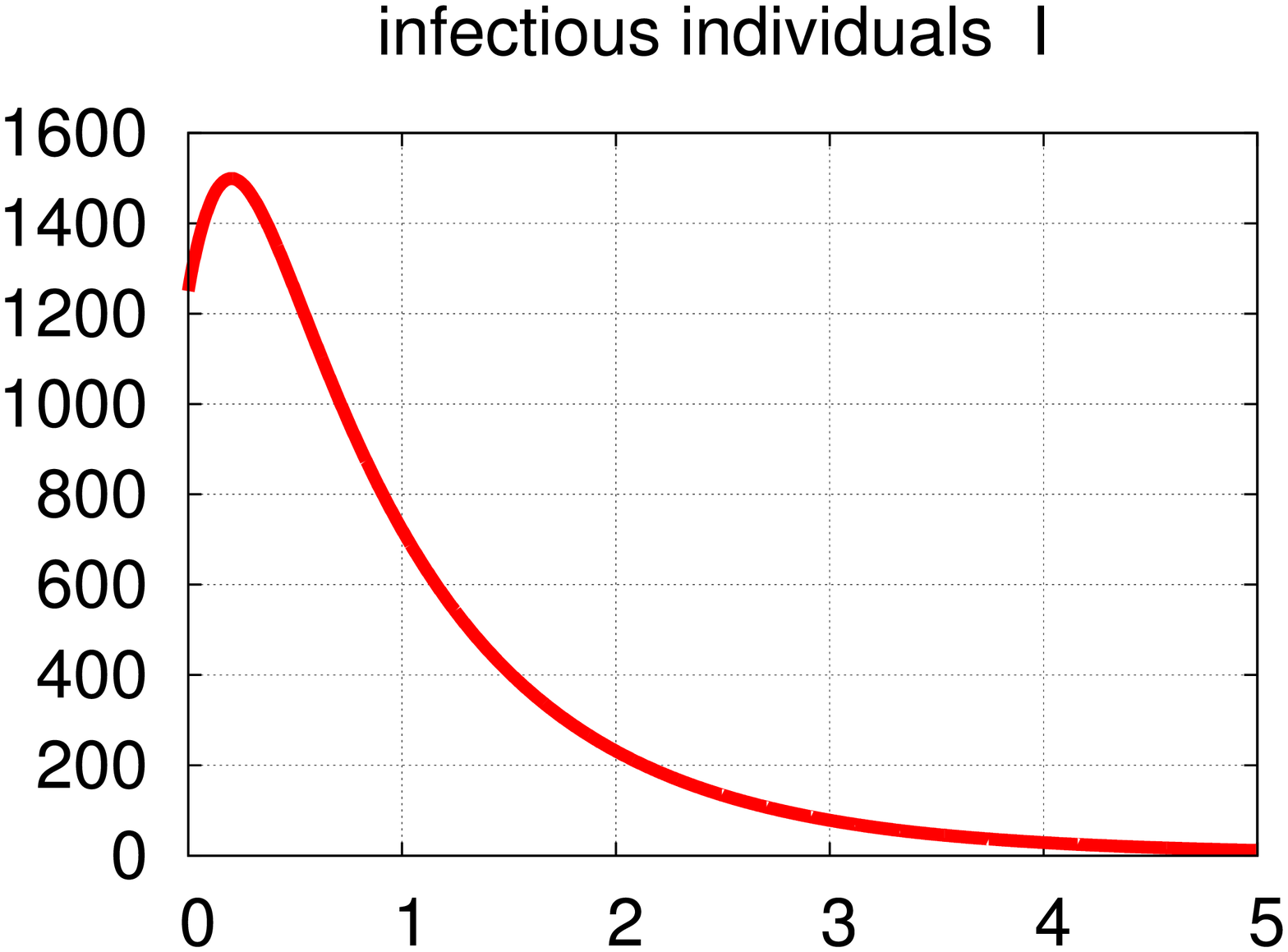}
\\[0.4cm]
\includegraphics[width=0.32\textwidth]{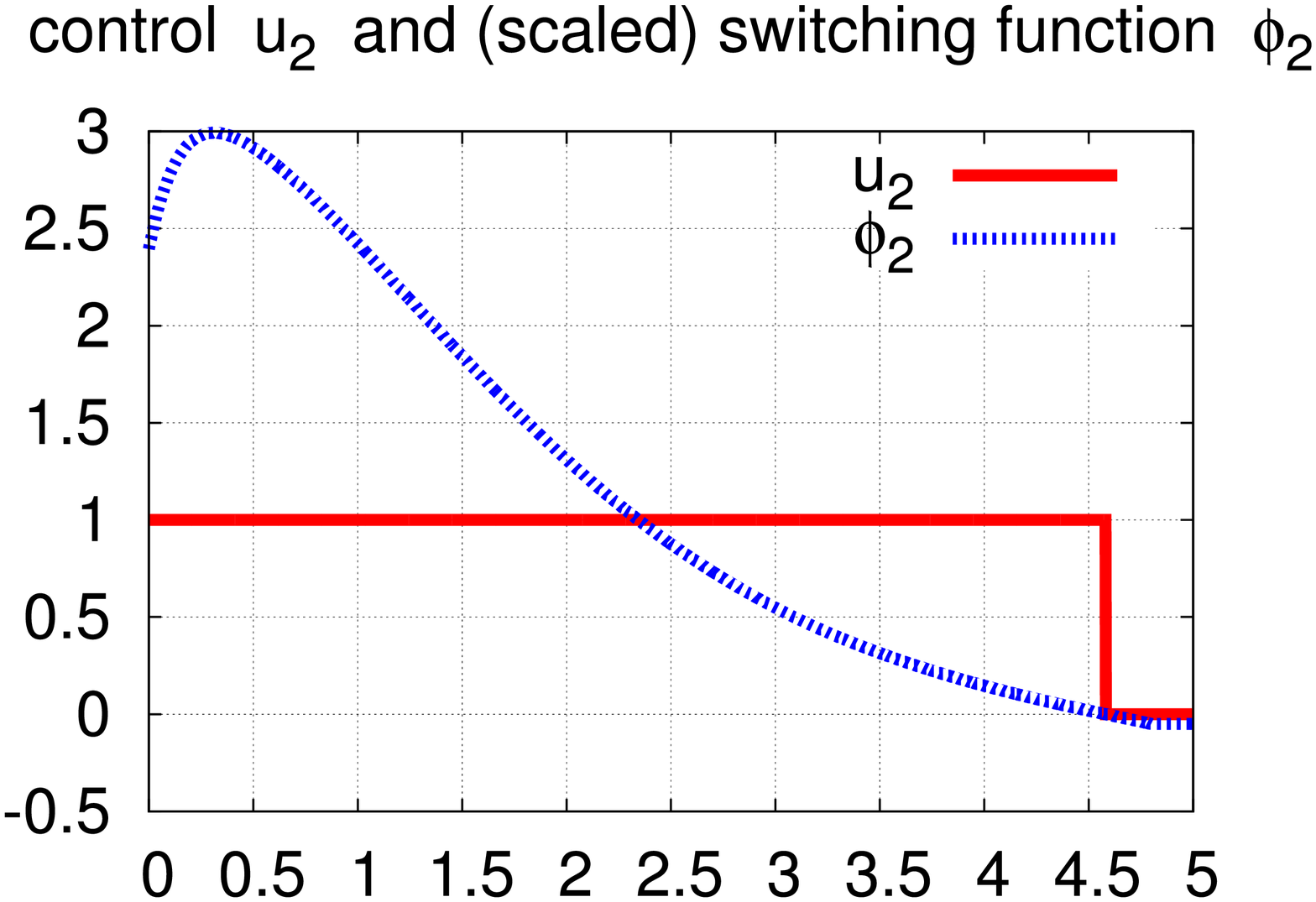}
\hspace{2mm}
\includegraphics[width=0.3\textwidth]{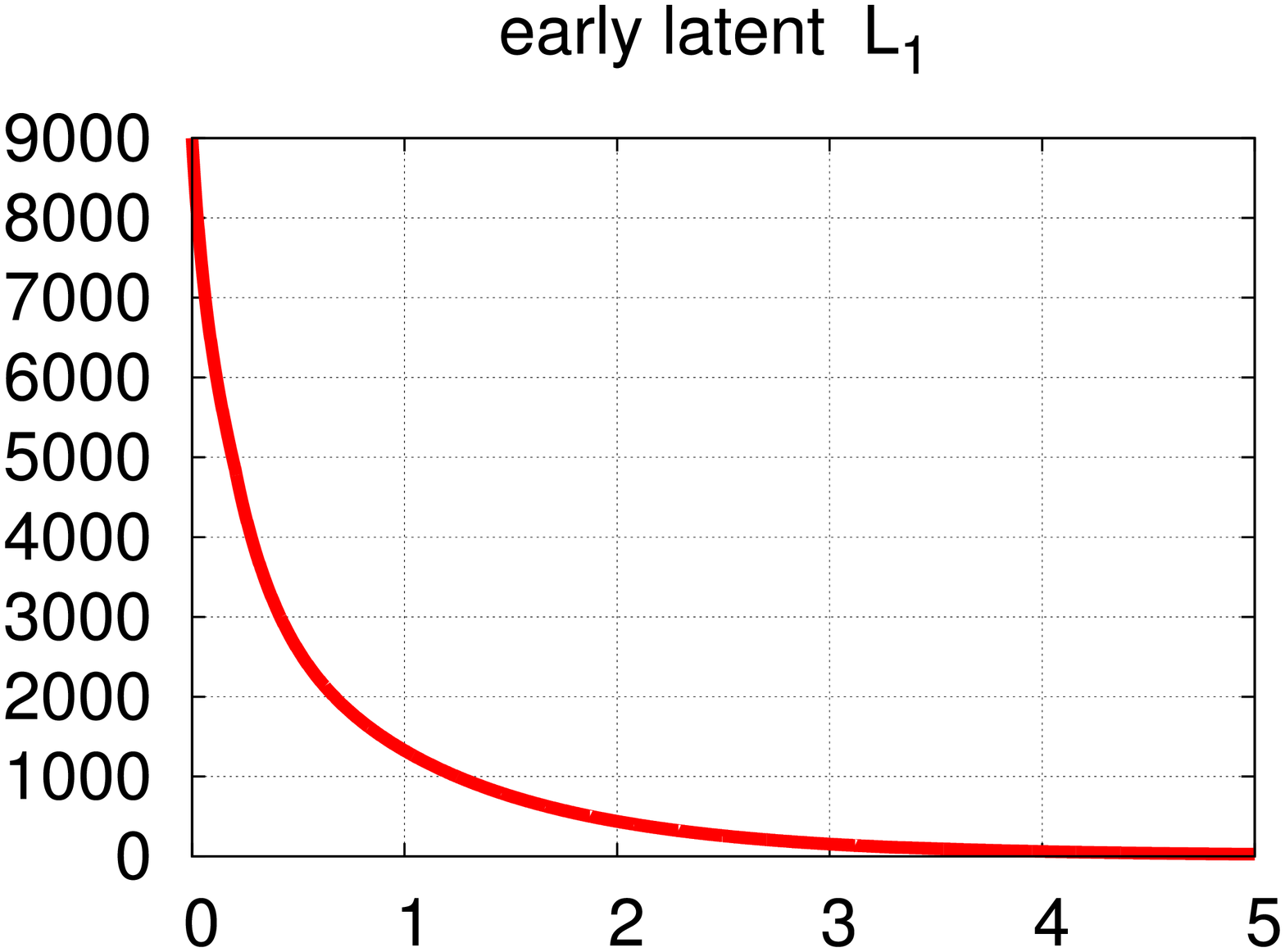}
\hspace{3mm}
\includegraphics[width=0.3\textwidth]{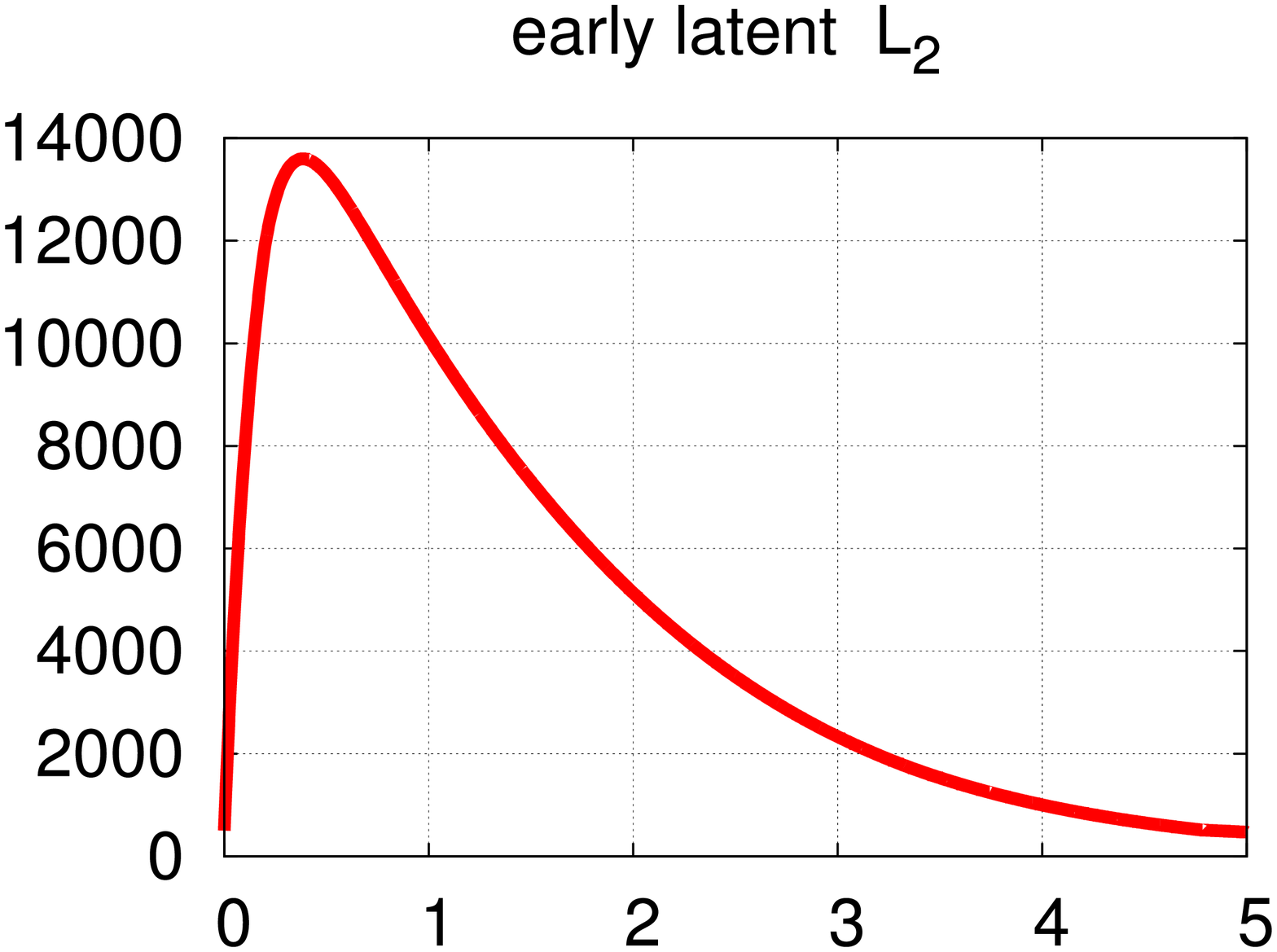}
\caption{Optimal control and state variables of the delayed TB model
with $L^1$-objective \eqref{J1-objective}, $W_1=W_2=50$ and delays
$d_I=0.1, d_{u_1}= d_{u_2}=0.2$. {\it Top row}: 
(a) control $u_1$ \eqref{eq:controlLawDelay}
and (scaled) switching function $\phi_1$ \eqref{eq:switch}
satisfying the control law \eqref{control-law} for $k=1$, 
(b) susceptible individuals $S$ and recovered individuals $R$, 
(c) infectious individuals $I$.
{\it Bottom row}: (a) control $u_2$ \eqref{eq:controlLawDelay}
and (scaled) switching function $\phi_2$ \eqref{eq:switch}
satisfying the control law \eqref{control-law} for $k=2$, 
(b) early latent $L_1$, (c) persistent latent $L_2$.}
\label{label:of:fig:5}
\end{figure}
As in the non-delayed problem, the switching functions satisfy the strict
bang-bang property related to the Maximum Principle:
$$
\phi_k(t) > 0 \quad \mbox{for} \;\; 0 \leq t < t_k \,,
\quad \dot\phi_k(t_k) <0 \,, \quad
\phi_k(t) < 0 \quad \mbox{for} \;\; t_k < t \leq T \;\; (k=1,2).
$$
However, we are not aware in the literature of any type of
{\it sufficient conditions} which could be applied to
the extremal solution shown in Figure~\ref{label:of:fig:5}.

We also compared the extremal solutions for the $L^1$-type objective
\eqref{J1-objective} and the $L^2$-type objective \eqref{J2-objective}.
Since the controls are very similar to those in Figure~\ref{label:of:fig:3},
they are not displayed here. Figure~\ref{label:of:fig:6} shows the extremal
controls for the $L^1$-objective \eqref{J1-objective} for the increased
weights $W_1=W_2=150$.
\begin{figure}[ht!]
\centering
\includegraphics[width=5.2cm,height=3.8cm]{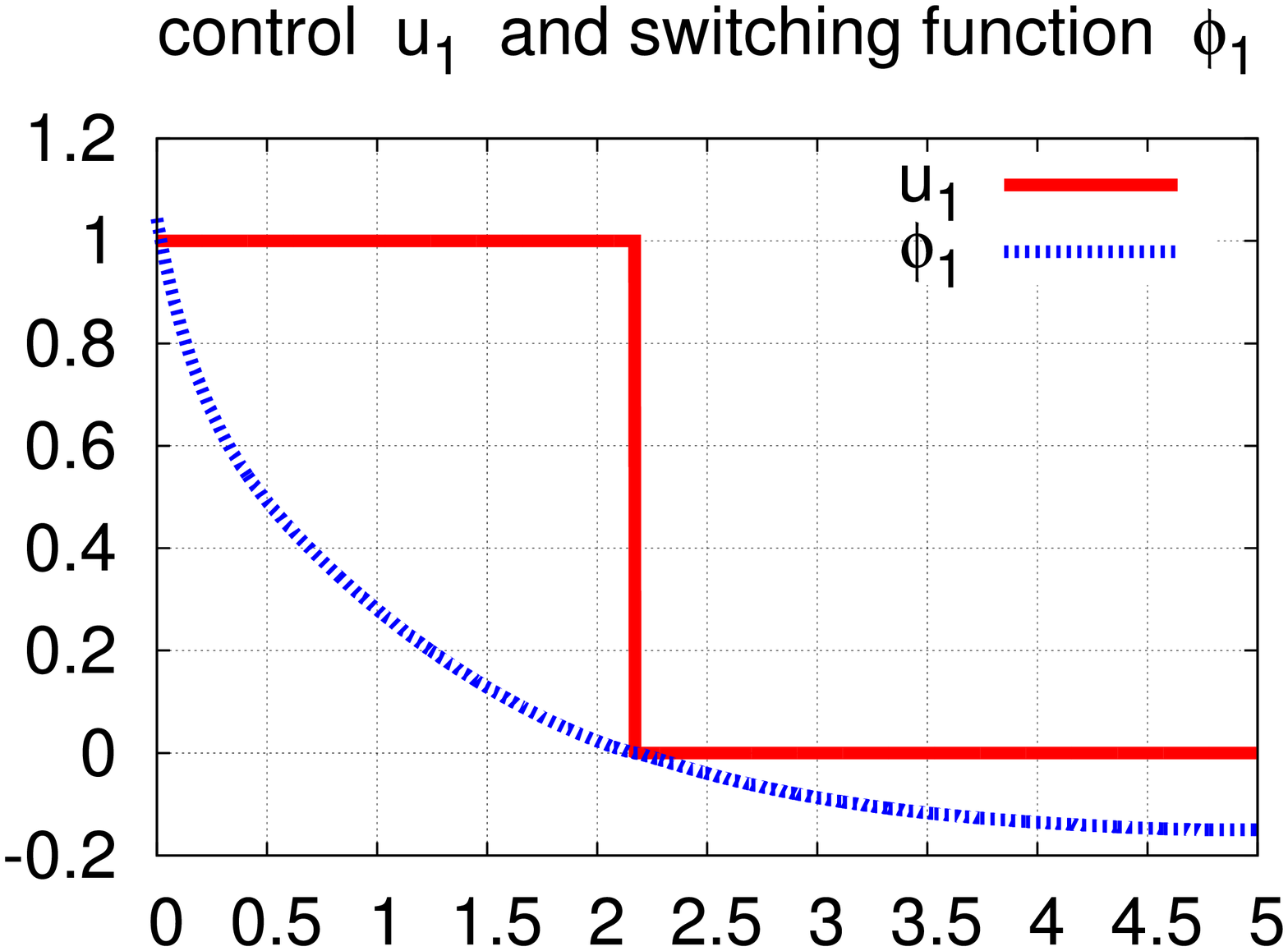}
\hspace{6mm}
\includegraphics[width=5.2cm,height=3.8cm]{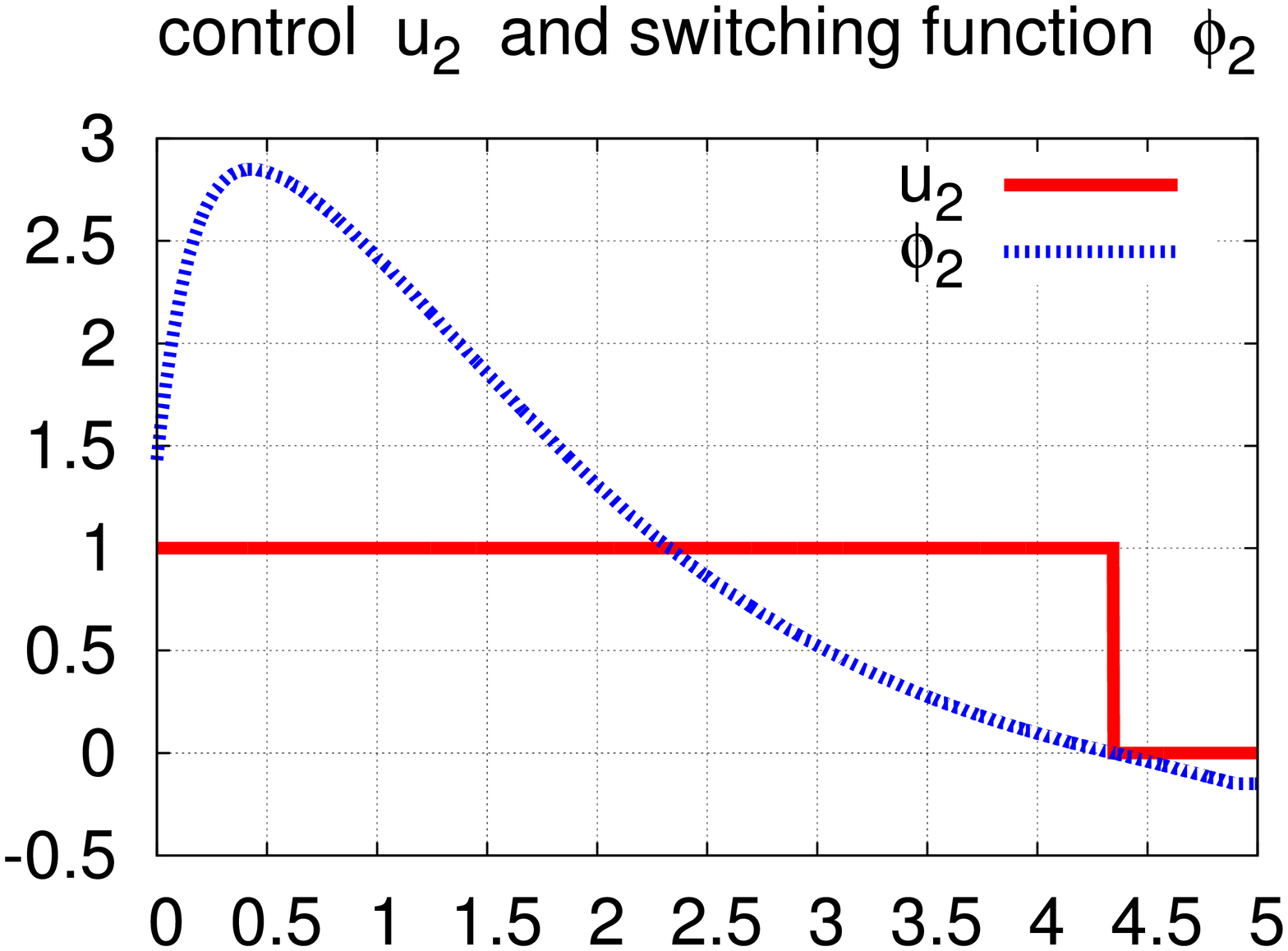}
\caption{Extremal controls for the delayed TB model with $L^1$
objective \eqref{J1-objective}, $W_1=W_2=150$ and delays
$d_I=0.1$, $d_{u_1}= d_{u_2}=0.2$. (a) control $u_1$ \eqref{eq:controlLawDelay}
and (scaled) switching function $\phi_1$ \eqref{eq:switch} satisfying 
the control law \eqref{control-law} for $k=1$,
(b) control $u_2$ \eqref{eq:controlLawDelay} 
and (scaled) switching function $\phi_2$ \eqref{eq:switch} 
satisfying the control law \eqref{control-law} for $k=2$.}
\label{label:of:fig:6}
\end{figure}

The most significant influence on the optimal controls
is exerted by the transmission coefficient $\beta$.
\begin{figure}[ht!]
\centering
\includegraphics[width=5.2cm,height=4.0cm]{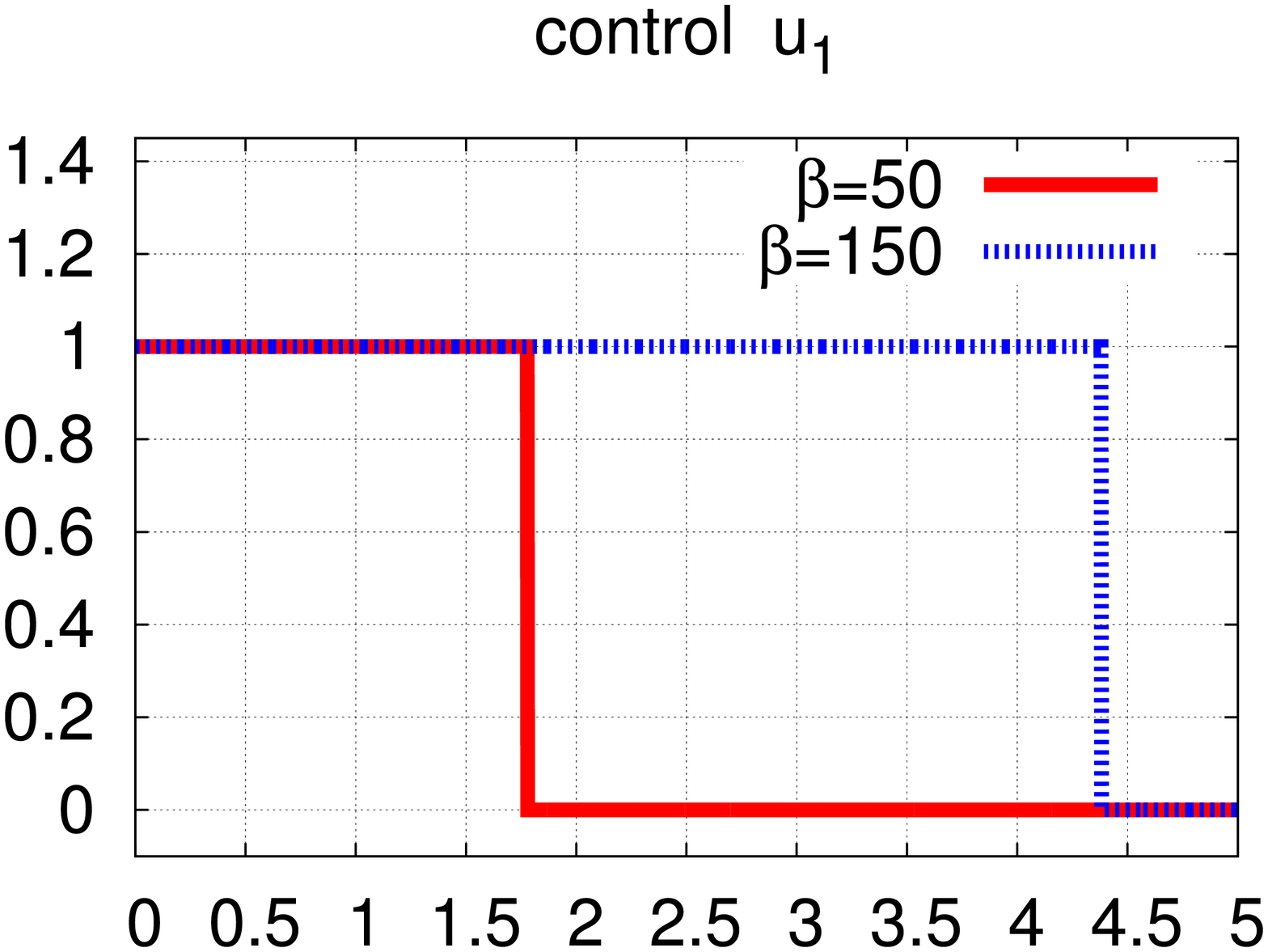}
\hspace{6mm}
\includegraphics[width=5.2cm,height=4.0cm]{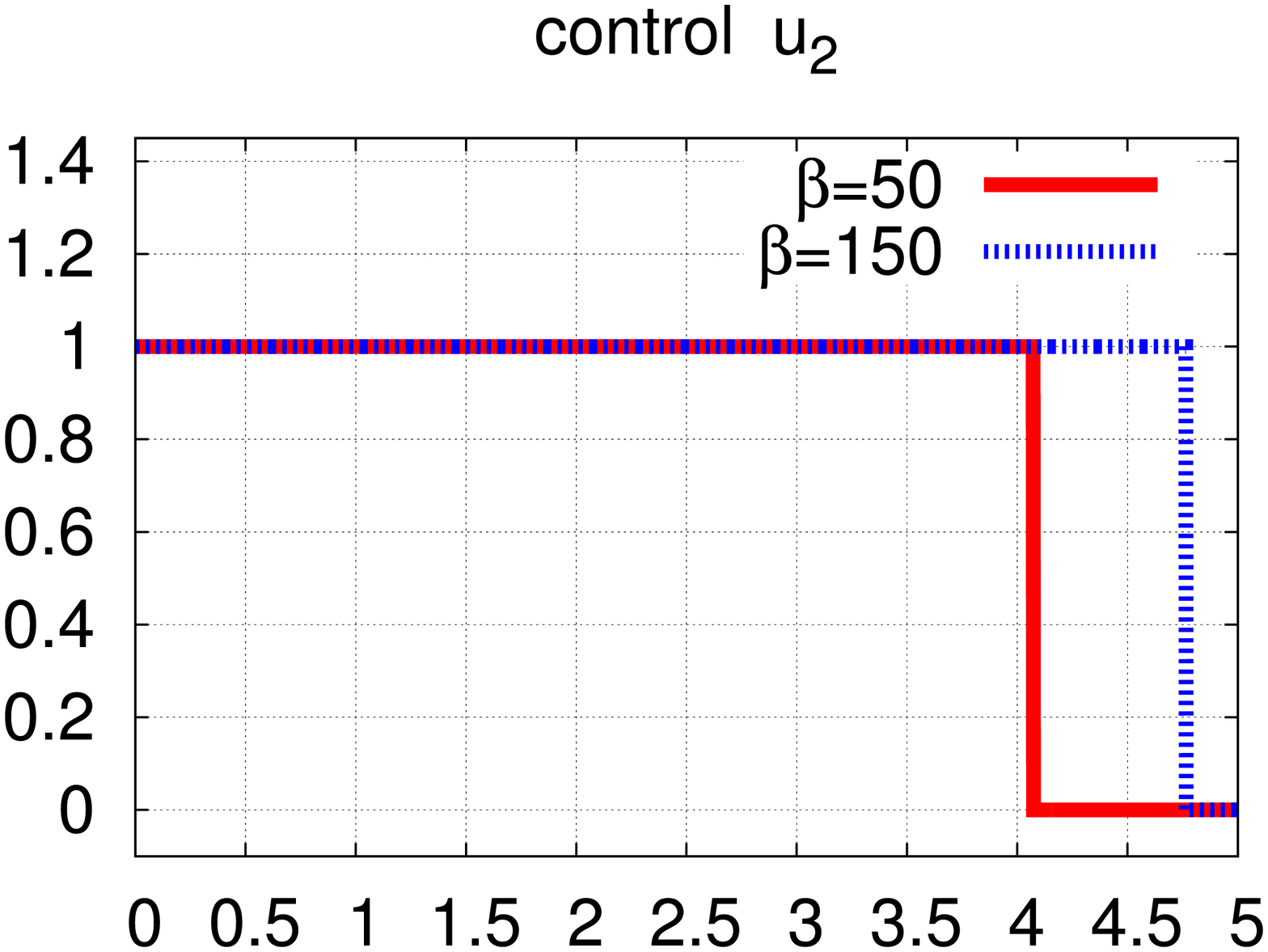}
\caption{Comparison of extremal controls for parameters $\beta=50$ and $\beta=150$
in the delayed TB model with $L^1$ objective \eqref{J1-objective}, weights
$W_1=W_2=150$ and delays  $d_I=0.1, d_{u_1}= d_{u_2}=0.2$.}
\label{label:of:fig:7}
\end{figure}
It can be clearly  seen that the increase of the transmission coefficient
$\beta$ triggers a substantial increase in the switching times $t_k$ of the
bang-bang controls $u_k$ for $k=1,2$ (cf. Figure~\ref{label:of:fig:7})
as could be expected from the equation for $\dot S$. Let us perform in this
case a more detailed sensitivity analysis of the trajectories with respect
to the parameter $\beta \in [50,150]$. We compute the extremal solutions
for a sufficiently fine grid of parameters $\beta \in [50,150]$ by a homotopy
method and plot the objective $J_1(x,u)$ and the terminal state
$(S(T),L_1(T),I(T),L_2(T),R(T))$ as a function of $\beta$.
The numerical results are shown in Figure~\ref{label:of:fig:8}.
\begin{figure}[ht!]
\centering
\includegraphics[width=0.31\textwidth]{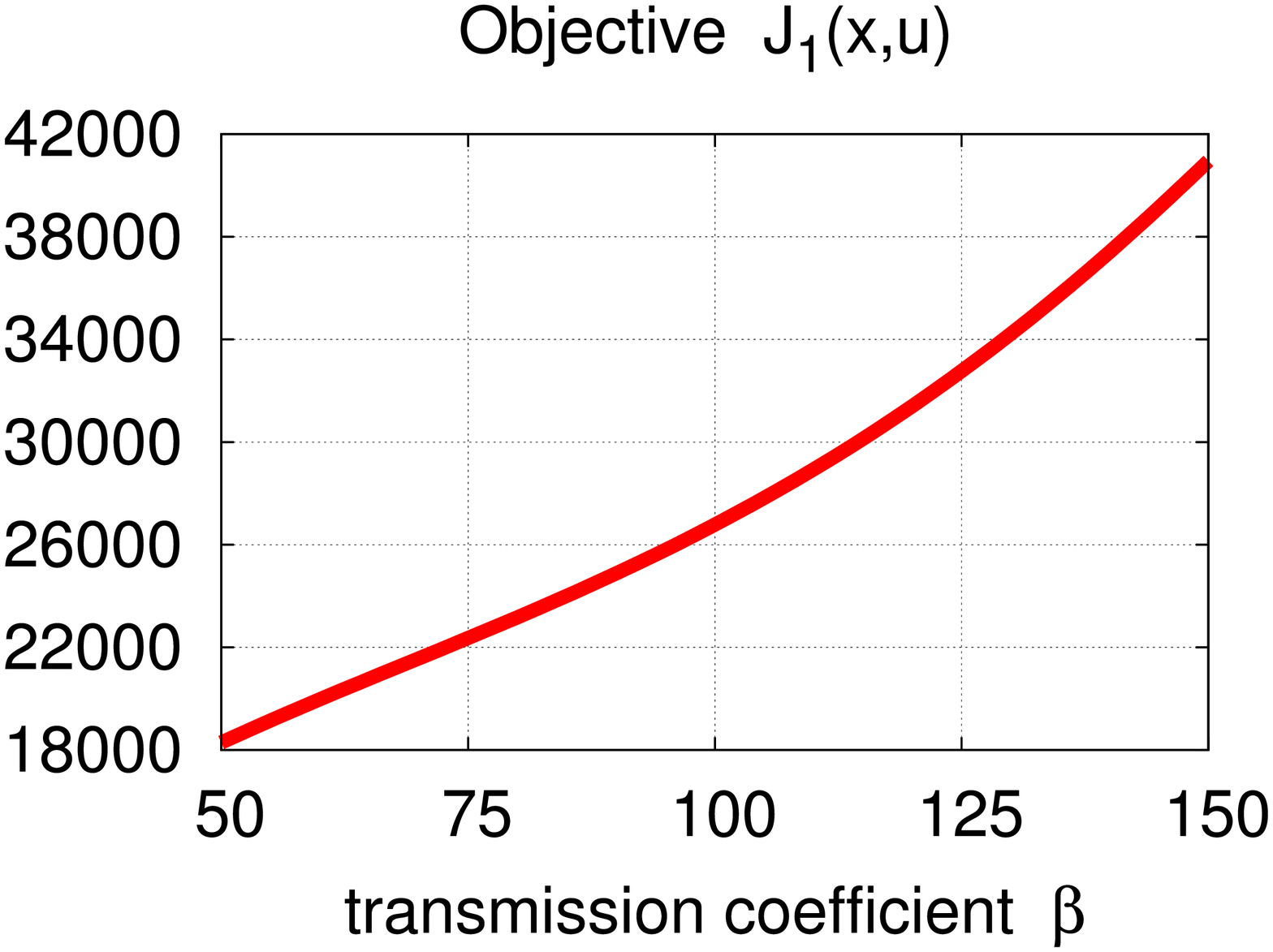}
\hspace{2mm}
\includegraphics[width=0.31\textwidth]{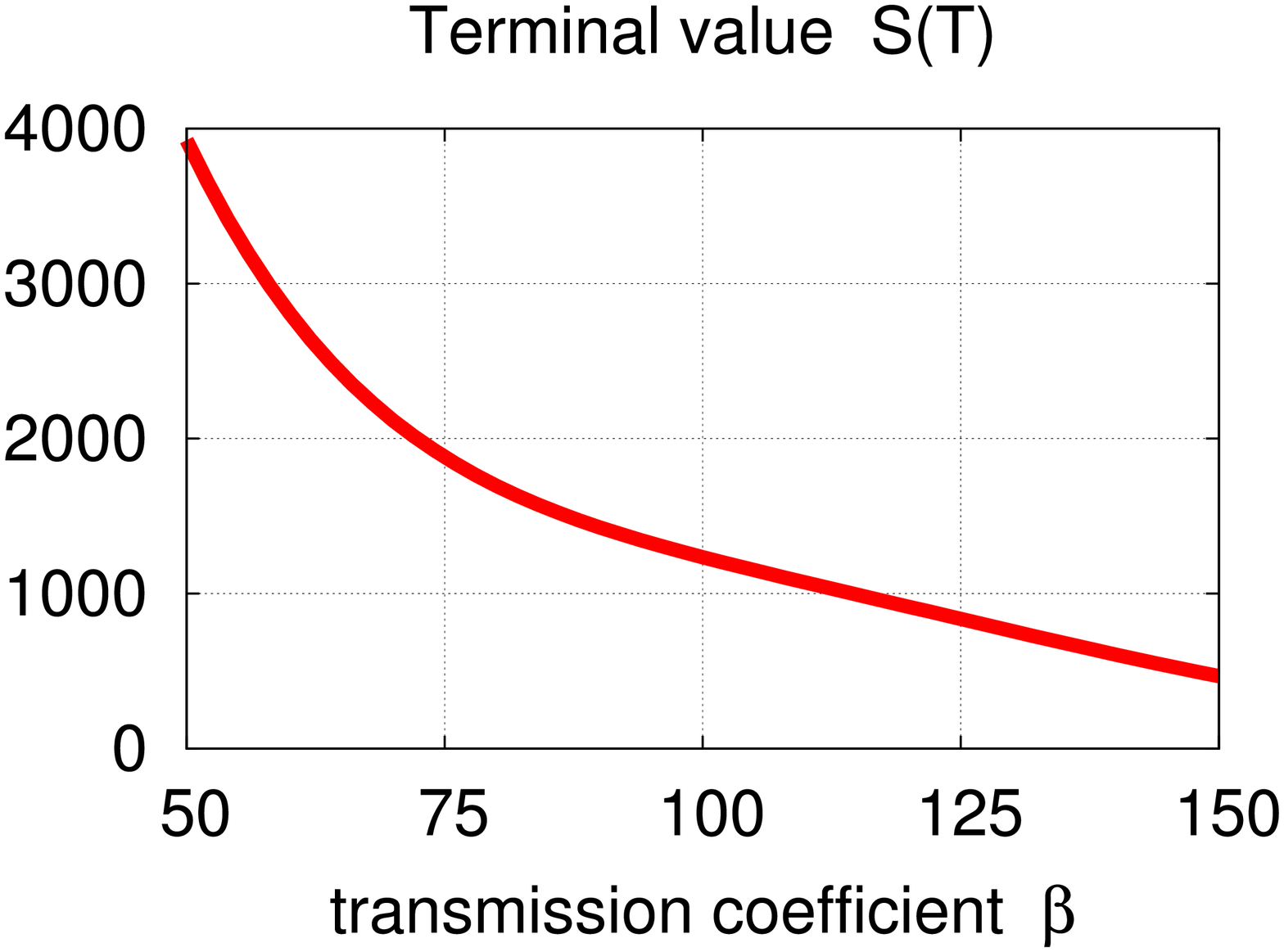}
\hspace{2mm}
\includegraphics[width=0.31\textwidth]{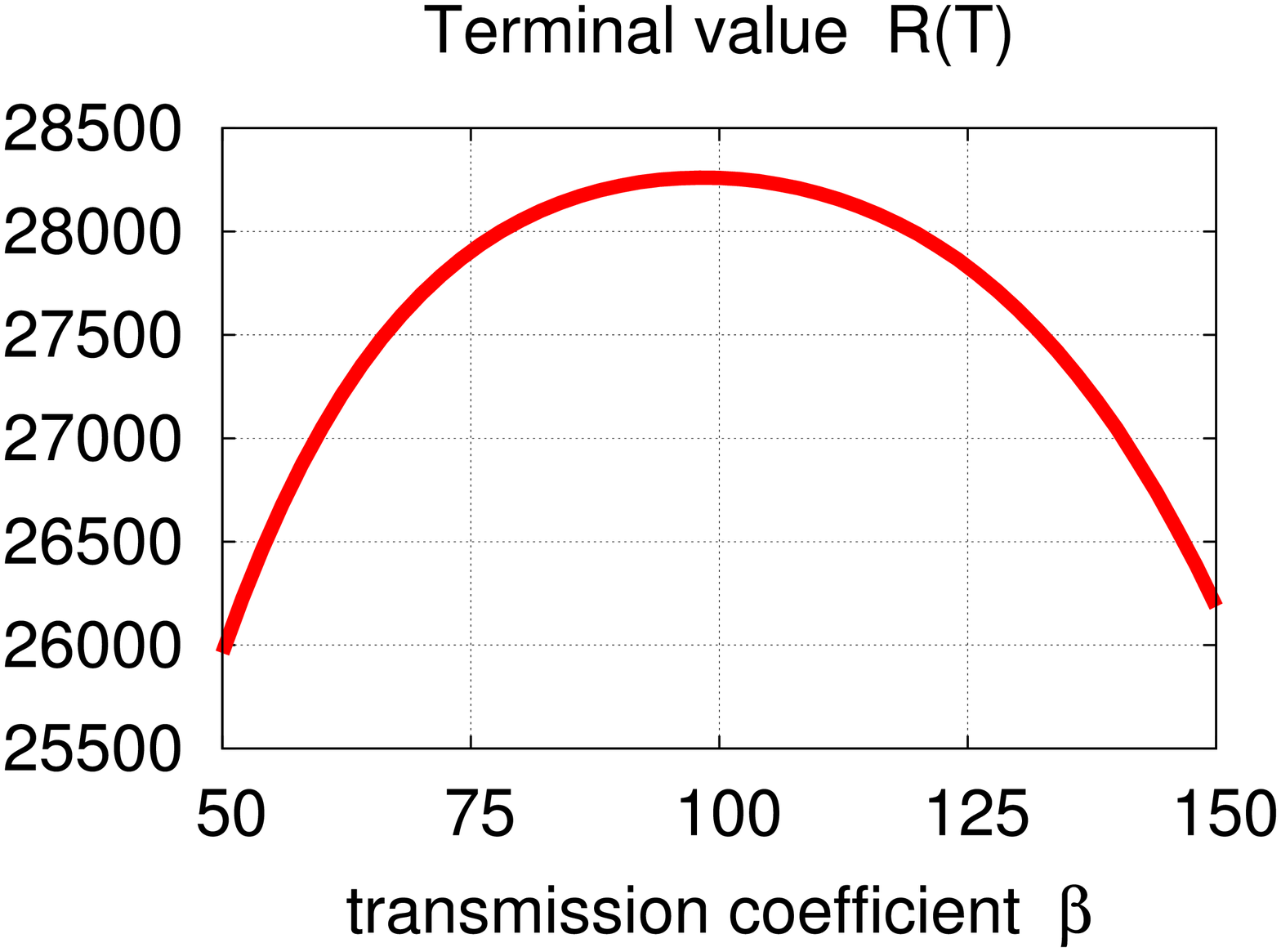}
\\[0.3cm]
\includegraphics[width=0.31\textwidth]{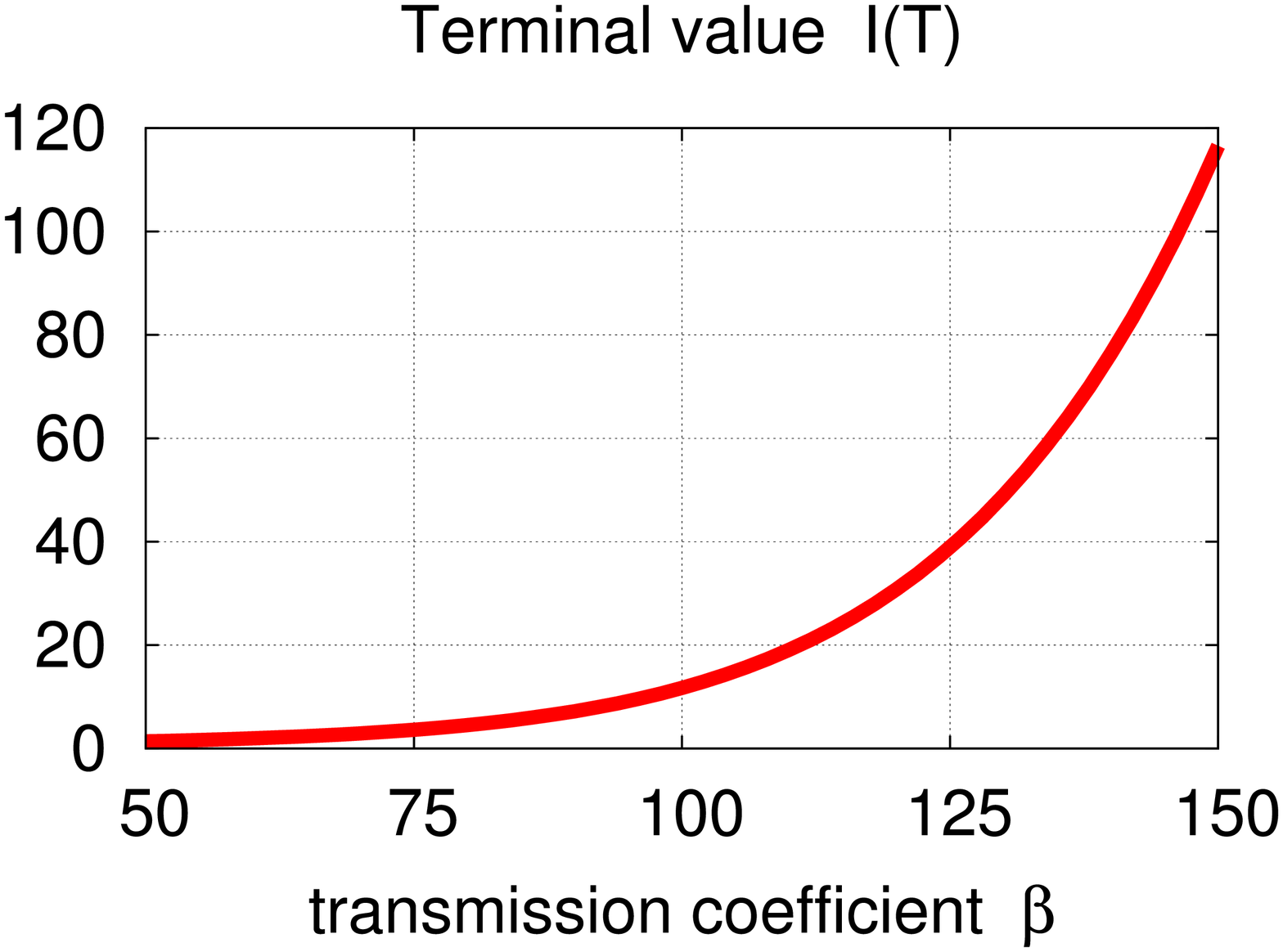}
\hspace{2mm}
\includegraphics[width=0.31\textwidth]{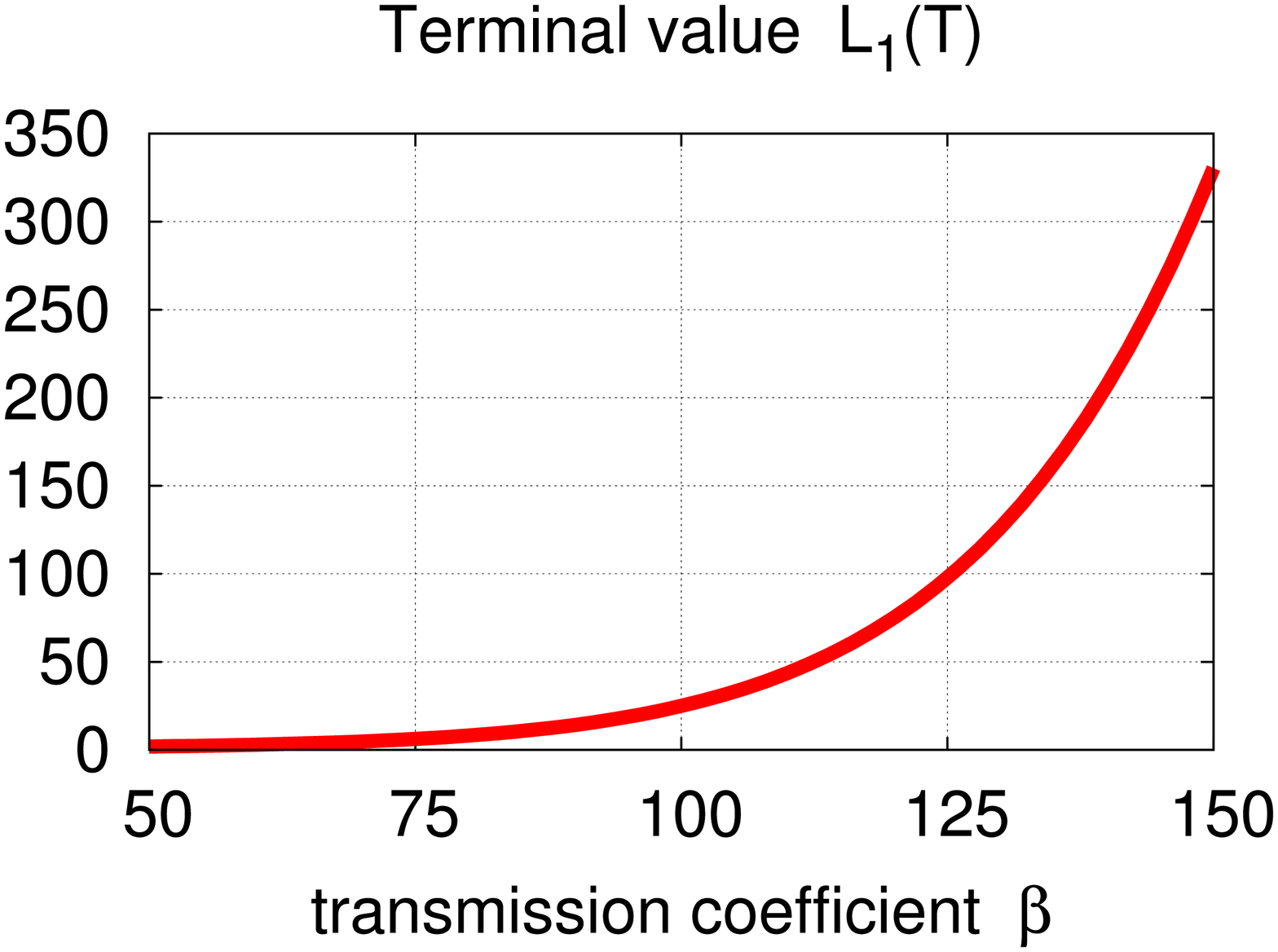}
\hspace{2mm}
\includegraphics[width=0.31\textwidth]{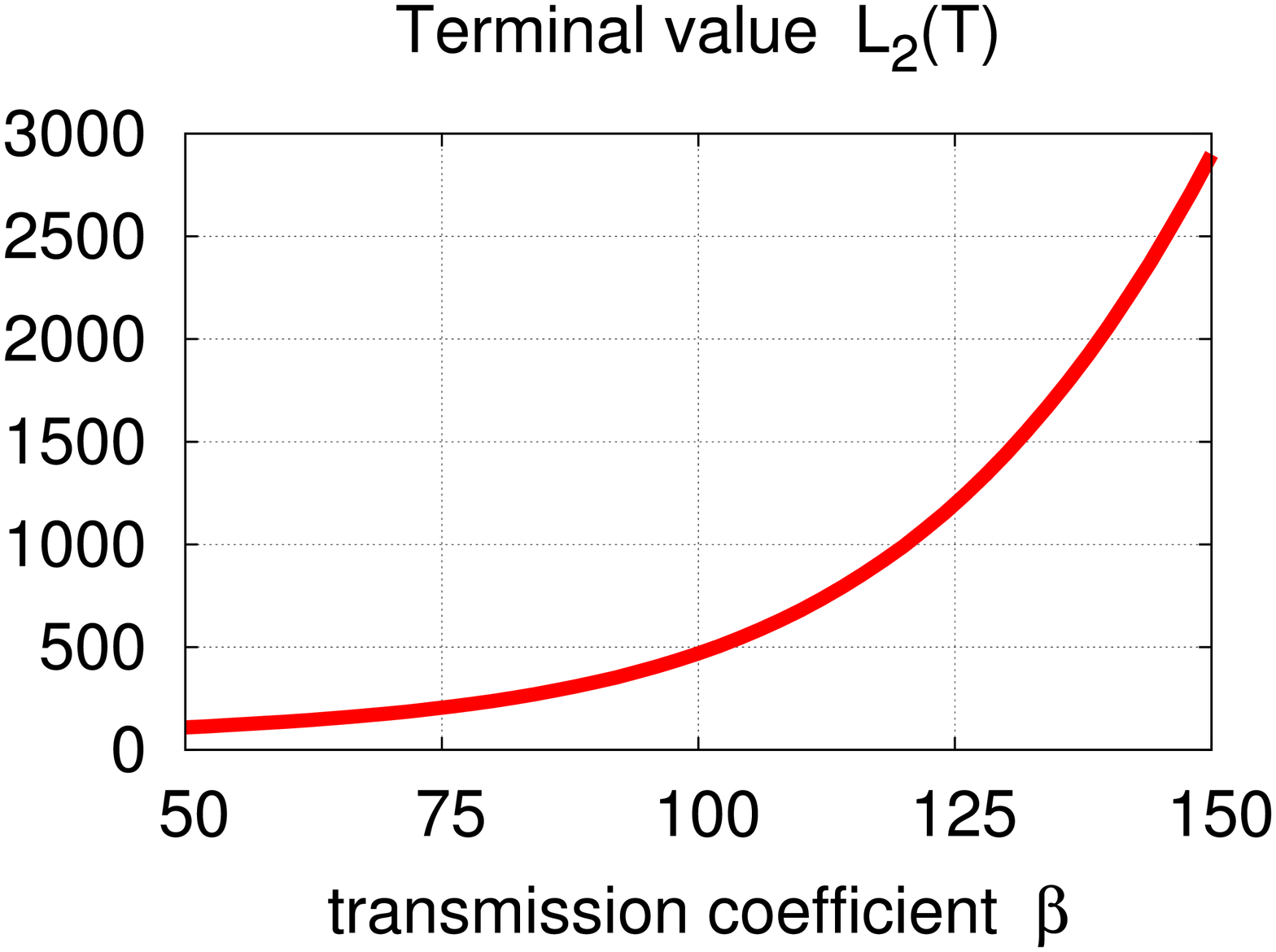}
\caption{Homotopic solutions of  the delayed TB model with $L^1$ objective
\eqref{J1-objective} and weights $W_1=W_2=50$ for parameters
$\beta \in [50,150]$. Displayed are the objective value $J_1(x,u)$
and the terminal states $S(T)$, $R(T)$, $I(T)$, $L_1(T)$, $L_2(T)$.}
\label{label:of:fig:8}
\end{figure}


\section{Conclusions}
\label{sec:7}

We introduced a discrete time delay $d_I$ in a mathematical model
for tuberculosis (TB), which represents the delay on the diagnosis
of active TB infection and commencement of treatment.
The delay on the diagnosis of active TB has important
negative consequences on TB control and eradication.
The later the treatment of active TB starts, the more people can be infected
and may die from TB. The introduction of a time delay on the state variable
of active TB infected individuals $I$ is therefore justified
from the epidemiological point of view.

When a time delay is introduced into a mathematical model, the stability
of its disease free and endemic equilibriums may change. We proved that
the disease free equilibrium (DFE) of the TB model with delay in the state variable
$I$ is unstable for any time delay $d_I \geq 0$, whenever the basic reproduction
number is greater than one. We derived  conditions under which the model
is locally asymptotically stable for $d_I = 0$ and proved that there exists
at least one positive time delay $d_I > 0$ such that the DFE is unstable
for $R_0 < 1$. Despite of this, we also proved that for the concrete
time delay $d_I = 0.1$, the set of parameters of Table~\ref{table:parameters},
and a transmission coefficient for which $R_0 < 1$, the DFE is locally
asymptotically stable. The value $d_I = 0.1$ (36.5 days) fits the data
reported in the literature for the delay in the diagnosis of active TB.
For the endemic equilibrium (EE), we considered that the transmission
coefficient $\beta$ is such that $R_0 > 1$ and proved the local stability
of the specific EE associated to our set of parameters
and discrete time delay $d_I = 0.1$.

We proposed an optimal control problem where the control system is the
mathematical model for TB with time delay in the state variable $I$ and where
the control functions $u_1$ and $u_2$ represent, respectively, the effort
on early detection and treatment of recently infected individuals $L_1$,
and the application of prophylactic treatment to persistent latent individuals
$L_2$. The objective is to minimize the number of individuals with active
and persistent latent TB as well as the cost associated to the implementation
of the control measures. Human immune system can take from two to eight weeks
to react to TB infection and detection. Moreover, prophylactic treatment of early
and persistent latent individuals may face resistance from patients and health
care services. Based on these facts, we introduced discrete time delays
in the controls $u_1$ and $u_2$. To our knowledge, this is the first time
an optimal control problem for TB with delays in the state and control variables
is investigated.

Firstly, we considered the non-delayed case ($d_I = d_{u1} = d_{u2}=0$)
and compared the solutions for $L^1$ and $L^2$ objectives. Our results
show that the optimal state variables are nearly identical for both objectives,
the control variables differing only on a terminal interval. The optimal values
of the objective functionals are also very close. We observed that for the $L^1$
objective the optimal control $u_2$ may have an additional zero arc at the
beginning if the weight constants associated to the implementation of the
control policies $u_1$ and $u_2$ are big enough. For the delayed optimal control
problem, our focus was on the $L^1$ objective since it seems to be the
more realistic and, comparing the extremal solutions of $L^1$ and $L^2$
objectives, the differences are similar to the non-delayed case. In the
delayed case, the switching times $t_1$ and $t_2$ are significantly smaller
and the costs are also smaller, when compared to the non-delayed case.
From an epidemiological point of view, when we introduce delays in the TB model,
the optimal solutions for the reduction of the number of individuals with active
and persistent latent TB infection are associated to control policies that are
less costly and can be implemented in a shorter period of time. Associated
to these positive facts, we observed that if a delay is introduced in the
state variable $I$ and in the controls, the number of individuals with active
TB infection at the end of five years is reduced in approximately 45.2 per cent,
when compared to the non-delayed case. Similarly, the number of persistent
latent individuals at the end of five years is also reduced in approximately
60.1 per cent. Moreover, the terminal number of susceptible and recovered
individuals is bigger in the delayed case. Through a sensitivity analysis,
we observed that the transmission coefficient $\beta$ has a significant influence
on the optimal controls and the cost functional, and that the number of active
infected individuals and the number of early and persistent latent individuals
also increase with $\beta$. The number of recovered individuals increases for
$\beta \in [50, 100]$ and decreases for $\beta \in [100, 150]$, which means
that for $R_0 > 3.1$ the control measures are no longer effective
and should be rethought.


\section*{Acknowledgments}

This work was partially supported by FCT within project TOCCATA,
reference PTDC/EEI-AUT/2933/2014. Silva and Torres were
also supported by CIDMA and project UID/MAT/04106/2013;
Silva by the post-doc grant SFRH/BPD/72061/2010.
The authors are very grateful to anonymous reviewers 
for their careful reading and helpful comments. 



\medskip

Received October 30, 2015; revised May 24, 2016; accepted June 26, 2016.

\medskip


\end{document}